\newcommand{\vep}{\varepsilon}
\newcommand{\vphi}{\varphi}
\newcommand{\R}{\mathbb{R}}
\newtheorem{theorem}{Theorem}[section]
\newtheorem{lemma}[theorem]{Lemma}
\newtheorem{claim}[theorem]{Claim}
\newtheorem{definition}[theorem]{Definition}
\title{Multi-Agent Recurrent Rendezvous Using Drive-Based Motivation}
\author{Craig Thompson and Paul Reverdy}
\begin{document}
	\maketitle
	
	\begin{abstract}
		Recent papers have introduced the Motivation Dynamics framework, which uses bifurcations to encode decision-making behavior in an autonomous mobile agent. In this paper, we consider the multi-agent extension of the Motivation Dynamics framework and show how the framework can be extended to encode persistent multi-agent rendezvous behaviors. We analytically characterize the bifurcation properties of the resulting system, and numerically show that it exhibits complex recurrent behavior suggestive of a strange attractor.
	\end{abstract}

	\section{Introduction}\label{sec:intro}
	
	Consider a group of people monitoring wildlife in a large area. Each person has a different location where they monitor the nest or den of some local animal, say a songbird. An individual will record the bird, make notes on its behavior, and otherwise gather information. Then, once sufficient information has been gathered, all members of the group rendezvous with each other to compare notes. Then, the group splits up again to gather more information. Abstractly, this is a problem of recurrent rendezvous alternating with designated tasks. In this paper we focus on developing a control scheme to automatically stabilize such a recurrent rendezvous behavior for a system of autonomous mobile robots.
	
	The multi-agent rendezvous problem sits at the intersection of robot motion planning and distributed control. In the problem, multiple autonomous robots must coordinate and arrive at a location in space which is not predefined. There may be constraints on what an agent knows about the space or the other agents. Additional challenges such as obstacles, adversaries, or noise may be present.
	
	The multi-agent rendezvous problem has seen significant attention in recent decades \cite{alpern1995rendezvous,francis2016flocking,lin2005multi,dimarogonas2007rendezvous}, with applications such as coverage and exploration \cite{li2016underwater,roy2001collaborative}, persistent recharging \cite{mathew2013graph,mathew2015multirobot}, and simultaneous location and mapping (SLAM) \cite{zhou2006multi}. Analyzing rendezvous is typically done using graph based approaches \cite{mathew2013graph,meghjani2012multi} (where one views the agents as nodes in a communication network, and analyzes various graph metrics) and dynamical systems tools \cite{friudenberg2018mobile}. 
	
	We distinguish two cases of the rendezvous problem. First is the case of one-time rendezvous, or non-recurrent rendezvous. Here the rendezvous problem is solved once the agents reach consensus. That is, if the problem begins at time $t_0$, the problem is solved if there is some time $t^*>t_0$ when the positions of the agents have reached (or are close enough to) a  point in the state space (possibly arbitrary). The second is the case of recurrent rendezvous. Here agents must always eventually rendezvous. That is, the recurrent problem is solved if for \emph{every time} $t>t_0$ there exists some later time $t^*>t$ when the agents reach a common point in the state space. There is some work on recurrent rendezvous, such as addressing the issue of autonomous vehicles needing to repetitively recharge \cite{mathew2013graph,mathew2015multirobot}, however most attention is given to the non-recurrent case. In this paper we focus on the recurrent case.
	
	When designing controllers to execute repetitive tasks, one approach is for an individual agent to use a decision making model to coordinate these tasks. Here we consider a task based approach to the problem of persistent multi-agent rendezvous. Task based autonomous behavior has been widely studied \cite{garrett2021integrated,ghallab2016automated,karpas2020automated}: complex behaviors are constructed from smaller, simpler tasks. Each individual task is encoded in a vector field (e.g., going to a target state is encoded as an attracting fixed point on the state space). The goal of the autonomous controller is to then coordinate these tasks to achieve said high-level behavior. Ultimately, this coordination can be framed as a form of decision making, which is a large and active area of research. 
	
	An individual in a decision making model has an action space $\mathcal{A} $, and state space $\mathcal{S}$. If $a_t\in\mathcal{A}$ and $s_t\in\mathcal{S}$ are the action and state at time $t$ we can define the history as the tuple $H_t = (s_0,a_0,s_1,\dots,a_{t-1},s_t)$. A policy $\rho$ is a map $\rho:H_t \to a_t$. In some cases we have a memory-less policy. That is, a policy that only depends on the current state $s_t$. While we have used notation that implies discrete time, this generalizes to the continuous time case in a natural way. For our applications it will be most reasonable to consider the continuous time case, and that will be the assumption from here on.
	
	In our task-based framework an action is choosing which of $n$ tasks to pursue. It is convenient for us to phrase our policy as an optimization problem. We consider an expanded state space $ s=(x,v) $ where $x\in\R^d$ is our physical state and $v\in\R_+^n$ is our value state. The value state encodes the relative values of pursuing each of the $n$ tasks. Our policy is then to pick the action which coincides with the task with maximal value. When framing the action/policy/state system as a control system, it is useful to introduce a motivation state $m\in\mathcal{M}$ which tracks the current task being pursued. Here $\mathcal{M}$ is the space of possible motivation states, and it may be discrete or continuous.
	
	Most decision making research reduces the problem to discrete decision states: Given $n$ tasks, pick exactly one to pursue. This can be formulated mathematically. Consider a motivation vector $ m \in \{0,1\}^n $ such that $ m_i = 1 $ iff $ \arg\max_j(v_j) = i $, and $ m_j = 0 $ otherwise. We call this the \emph{discrete motivation} case. A natural extension, and one that has been of recent interest, is the \emph{continuous motivation} case. The difference is that we consider the motivation vector $ m $ to live on the $ (n-1) $-simplex $\Delta^n$, where $\Delta^n = \{ x \in \R^n : x_i \geq 0, \sum_i x_i = 1 \}$. Then, following the work in \cite{DP-etal:13, PBR-DEK:18, PBR-VV-DEK:21}, we use a pitchfork bifurcation to encode a dynamical operation analogous to the $\arg \max$ operation from the discrete case. Following \cite{PBR-DEK:18}, we term the resulting dynamical system \emph{motivation dynamics}. Considering a continuous motivation state for our tasks produces interesting dynamics in the control system that are not otherwise seen in the discrete case. Moreover, it is possible to make entire system smooth, allowing for the use of smooth function analysis, as well as direct numerical integration.
	
	The work we present here is complementary to other recent work using bifurcation theory to construct decision-making mechanisms, e.g., \cite{RG-etal:18, AF-MG-NEL:19, PBR:20, AB-etal:21}. In contrast to these works, here we focus not on the decision-making mechanism itself, but on connecting the mechanism to physically-embedded tasks, i.e., the requirement for the various agents to move and perform rendezvous in response to their decisions.
	
	The main contribution of this paper is to address the persistent rendezvous problem using task-based navigation with a continuous motivation state. In the process, we develop a networked dynamical system that exhibits stable recurrent rendezvous encoded by an attractor. Concretely, our contributions are two-fold. First we develop the multi-agent extension of a control scheme originally developed in \cite{PBR-DEK:18} and expanded upon in \cite{thompson2019drive}. The scheme utilizes the so-called unfolding pitchfork bifurcation to coordinate agents with binary tasks. We then adapt this framework to the persistent rendezvous problem, and analyze the resulting dynamical system in a highly symmetric case. We derive analytical guarantees of the system behavior, and show that it achieves persistent rendezvous under appropriate conditions. In Theorem \ref{thm:deadlock-lim} we consider a limited form of the system and extract a closed-form critical parameter value for the breaking of system deadlock. Then, in Theorem \ref{thm:deadlock-lower-bound} we view the full system, and using results from the previous theorem, we prove the existence of such critical parameter values, along with a guaranteed bound for breaking deadlock. Next, by varying parameters $ \lambda $ and $ \sigma $ (defined below), we  characterize distinct regimes of behavior for the system (see Figure \ref{fig:behavioral-regimes}). Finally we discuss the results and provide a list of further research directions to be considered.
	
	
	
	
	
	

	

	\begin{center}
		\includegraphics[scale=1]{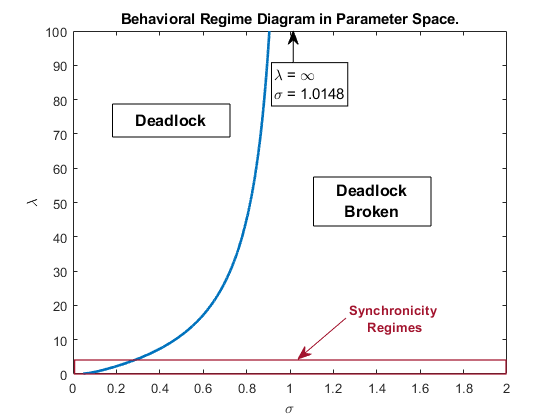}
		\captionof{figure}{Plot of behavioral regimes. Theorem \ref{thm:deadlock-lim} addresses the $ \lambda=\infty $ deadlock case, and Theorem \ref{thm:deadlock-lower-bound} addresses the deadlock bifurcation line for finite $ \lambda $.}\label{fig:behavioral-regimes}
	\end{center}

	\section{Preliminaries}\label{sec:prelims}
	We use the Motivation Dynamics framework \cite{PBR-DEK:18}. Here, we extend the Motivation Dynamics framework to the $N$-agent case. We assume that each agent $k$ has physical state $x_k \in \R^d$ and that its physical dynamics are fully actuated, i.e., that $\dot x_k = u$, with $x_k, u \in \R^d$. We assume that each agent has two tasks, one of which is to travel to a designated location, and the other is to rendezvous with the other agents. The values associated with these two tasks for agent $k$ are encoded in $v_k \in \R_+^2$ The two-task assumption allows us to reduce the motivation space, $\Delta^2$, to an interval on the real line. It is convenient to consider the motivation state $m_k$ of an agent to live on the interval $[-1,1]$, where each extreme of the interval uniquely corresponds with one of the two tasks. Let the state of agent $ k $ be denoted by
	$ \xi_k = (x_k^T,m_k,v_k^T)^T $, where $ x_k\in\R^{d} $ is the agent's physical location (or configuration), $ m_k \in [-1,1] $ is the agent's motivation state, and $ v_k\in\R_+^2 $ is the agent's value state. We assume that the agent's physical dynamics are fully-actuated, i.e.,  Denote $ \Xi $ as the matrix
	
	\begin{equation}\label{eq:matrix-form}
	\Xi = \left(\xi_1 \Big\vert \cdots \Big\vert \xi_N\right) = 
	\begin{pmatrix}
	X \\ 
	M \\ 
	V 
	\end{pmatrix},
	\end{equation}
	where $ X = \left(x_1 \vert \cdots \vert x_N\right) $, $ M = (m_1,\dots,m_k) $, and $ V = \left(v_1 \vert \cdots \vert v_N\right) $.
	
	For the time being, we will assume that the agents have two tasks, where their value dynamics are only influenced by their value state and the physical configuration of all agents. In this case agent $ k $ is controlled by the closed loop, fully continuous dynamical system
	
	\begin{subequations} \label{eq:expanded_dzk}
		\begin{align}
		\dot{x}_k &= f_{x_k}(X,M) = \frac{1+m_k}{2}F_{k,1}(X) + \frac{1-m_k}{2}F_{k,2}(X), \label{eq:dxk}\\
		\dot{m}_k &= f_{m_k}(M,V) = \sigma_k(1-m_k^2)(m_k+3\alpha_k), \label{eq:dmk}\\
		\dot{v}_k &= f_{v_k}(V,X) = \lambda_k(\vphi_k(X)-v_k), \label{eq:dvk}
		\end{align}
	\end{subequations}
	where $ \alpha_k = (v_{k,1}-v_{k,2})/(v_{k,1}+v_{k,2}) $. Compactly we have
	\begin{equation}\label{eq:dzk}
	\dot{\xi}_k = f_{\xi_k}(\Xi),
	\end{equation}
	and in matrix form
	\begin{equation}\label{eq:dXi}
	\dot{\Xi} = g(\Xi). 
	\end{equation}
	We will also denote the dynamics of the matrices $ X,M,V $ by $ \dot{X} = f_X(X,M) $, $ \dot{M} = f_M(M,V) $, $ \dot{V} = f_V(V,X) $.
	
	\subsection{Rendezvous with Designated Tasks}\label{sec:designated}
	By convention we will have task 1 be the designated task of a given agent, and task 2 will be rendezvous. We choose point visitation as the designated task. We have
	\[ \vphi_{k,1}(X) = \frac{1}{2}\left\| x_k-x^*_k \right\|^2, \]
	and $ F_{k,1}(X) = -\nabla \vphi_{k,1}(X) = -(x_k-x^*_k) $, where $x^*_k$ is the designated point for agent $k$ to visit.
	
	For the rendezvous task we choose navigation to the centroid. We set
	\[ \vphi_{k,2}(X) = \frac{N-1}{2N}\left\| x_k-\frac{1}{N-1}\sum_{\substack{j=1 \\ j\neq k}}^{N}x_j \right\|^2, \] 
	which gives
	\[ F_{k,2}(X) = -\nabla \vphi_{k,2}(X) = -\frac{N-1}{N}\left(x_k-\frac{1}{N-1}\sum_{j\neq k}x_j \right). \]
	Thus our first task looks like navigation to a designated point, and our second task looks like navigation to the centroid of the other agents.
	
	\section{The Symmetric Case}\label{sec:sym-case}
	Assuming $ N $ agents and $ x_k\in\R^2 $, denote $ R $ as the $ 2\times 2 $ rotation matrix given by
	\begin{equation}\label{eq:2d-rot-mat}
	R = \begin{pmatrix}
	\cos\left(\frac{2\pi}{N}\right) & -\sin\left(\frac{2\pi}{N}\right) \\
	\sin\left(\frac{2\pi}{N}\right) & \cos\left(\frac{2\pi}{N}\right)
	\end{pmatrix}. 
	\end{equation}
	We set the designated task locations as $ x_k^* = R^{k-1}u $, where $ u = (1,0)^T $ (i.e. $ N $ symmetrically  distributed points on the unit circle). At this point it is prudent to make the change of coordinates $ y_k = R^{-(k-1)}x_k $. Denoting the transformed $ X $ matrix by $ Y $, this results in
	\[ \vphi_{k,1}(Y) = \frac{1}{2}\left\| y_k-u \right\|^2, \]
	\[ \vphi_{k,2}(Y) = \frac{N-1}{2N}\left\| y_k-\frac{1}{N-1}\sum_{\substack{j=1 \\ j\neq k}}^{N}R^{j-k}y_j \right\|^2, \] 
	and 
	\[ \dot{y}_k = f_{y_k}(Y,M) = \frac{1+m_k}{2}(u-y_k) + \frac{1-m_k}{2}\left(\frac{1}{N}\sum_{j\neq k}R^{j-k}y_j - \frac{N-1}{N}y_k \right). \]
	
	Moreover, we will consider symmetric time scales for the motivation and value dynamics. That is, we set $ \sigma_k = \sigma $ and $ \lambda_k = \lambda $ for all $ k $. Let us set $ z_k = (y_k^T,m_k,v_k^T) $ and
	\[ Z = \left(z_1 \Big\vert \cdots \Big\vert z_N\right) = 
	\begin{pmatrix}
	Y \\ 
	M \\ 
	V 
	\end{pmatrix} \in \R^{5 \times N}. \]
	
	Then we may compactly represent our symmetric-case system by the dynamical equation
	\begin{equation}\label{eq:dZ}
	\dot{Z} = f(Z)
	\end{equation}
	
	The fact that this is the symmetric case allows us to utilize a group invariance of the system. First we note that following definition of equivariance for a map.
	
	\begin{definition}\label{def:equivariance}
		Consider a map $ h:\mathcal{A}\to \mathcal{A} $ and a group $ \Gamma $ with a well defined group action on elements $ x\in\mathcal{A} $. We say $ h $ is \emph{equivariant} under $ \Gamma $ if for all $ \gamma\in\Gamma $ and $ x\in\mathcal{A} $ we have that
		\begin{equation}\label{eq:equivariance}
		h(\gamma \cdot x) = \gamma\cdot h(x)
		\end{equation}
	\end{definition}
	
	With equivariance in mind, now we establish a symmetry group for the dynamics \eqref{eq:dZ}.
	
	\begin{lemma}\label{lemma:equivariant-group}
		Let $ C_n $ be the cyclical group generated by the permutation $ \gamma:(1,2,\dots,n) \mapsto (n,1,2,\dots,n-1) $ (i.e. the ``shift-by-one" permutation on $ n $ elements). Let $ \gamma $ act on a matrix $ M\in\R^{m\times n} $ by permuting the columns of $ M $ in the same way. Then we have that the mapping $ f $ given in \eqref{eq:dZ} is equivariant under $ C_N $ for $ N\geq3 $.
	\end{lemma}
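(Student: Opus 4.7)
The plan is to verify equivariance component-wise. Since $\gamma$ acts by cyclic shift on the columns of $Z$, writing $W = \gamma \cdot Z$ with $w_k = z_{k-1}$ (indices mod $N$), equivariance reduces to showing $f_{z_k}(W) = f_{z_{k-1}}(Z)$ for each $k$. Because $z_k = (y_k^T, m_k, v_k^T)^T$ decomposes cleanly and the dynamics \eqref{eq:dxk}--\eqref{eq:dvk} have no cross-terms between agents beyond what appears through $Y$, I would separately check the $\dot m_k$, $\dot v_k$, and $\dot y_k$ blocks.

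First I would dispatch the easy cases. The motivation dynamics $\dot m_k = \sigma(1-m_k^2)(m_k + 3\alpha_k)$ with $\alpha_k = (v_{k,1}-v_{k,2})/(v_{k,1}+v_{k,2})$ depend only on the single column $z_k$, so after the shift the $k$-th block of $f_M(\gamma\cdot Z)$ is manifestly the $(k-1)$-st block of $f_M(Z)$. For the value dynamics, $\dot v_k = \lambda(\varphi_k(Y) - v_k)$; the $v_k$ piece permutes trivially, and what remains is to establish $\varphi_{k,i}(\gamma\cdot Y) = \varphi_{k-1,i}(Y)$ for $i=1,2$. The designated-task potential $\varphi_{k,1}(Y) = \tfrac12\|y_k-u\|^2$ depends only on $y_k$, so again this is immediate.

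The substantive step is the rendezvous term, and the same computation handles both $\varphi_{k,2}$ and the coupling term in $\dot y_k$. The key observation is that the rotation operator appearing in the sum is $R^{j-k}$, which depends on $j$ and $k$ only through their difference; so I would perform the index change $i = j-1$ in the sum defining $\varphi_{k,2}(\gamma \cdot Y)$, under which the condition $j\neq k$ becomes $i \neq k-1$ and the exponent $j-k$ becomes $i-(k-1)$. This yields
\begin{equation*}
\varphi_{k,2}(\gamma\cdot Y) = \frac{N-1}{2N}\left\|y_{k-1} - \frac{1}{N-1}\sum_{i\neq k-1} R^{i-(k-1)} y_i\right\|^2 = \varphi_{k-1,2}(Y),
\end{equation*}
and an identical reindexing identifies the $k$-th block of $f_Y(\gamma\cdot Z)$ with the $(k-1)$-st block of $f_Y(Z)$. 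Combining the three blocks gives $f(\gamma \cdot Z) = \gamma \cdot f(Z)$, and since this holds for the generator it extends to all of $C_N$.

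The main obstacle is purely clerical: one must be careful that the cyclic shift on the index set $\{1,\dots,N\}$ is compatible with the modular arithmetic implicit in $R^{j-k}$ (i.e., that $R^N = I$ so shifting the summation index wraps around consistently), which is why the hypothesis $N \geq 3$ (and more essentially the definition of $R$ in \eqref{eq:2d-rot-mat}) is invoked to ensure the cyclic group is genuinely acting by rotations that commute with the reindexing. No delicate estimates are required beyond this bookkeeping.
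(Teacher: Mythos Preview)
Your argument is correct and follows essentially the same route as the paper: reduce to the generator $\gamma$, verify the $m$- and $v$-blocks by inspection (the latter via $\varphi_k$), and handle the only nontrivial coupling in the rendezvous sum by the reindexing $j\mapsto j-1$, using that $R^{j-k}$ depends only on $j-k$ modulo $N$. The paper's proof is organized slightly differently (it writes out both sides for $f_{y_k}$ and isolates the identity $\sum_{j\neq\gamma(k)}R^{j-\gamma(k)}y_j=\sum_{j\neq k}R^{j-k}y_{\gamma(j)}$ explicitly), but the substance is the same.
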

	
	\begin{proof}
		Let $ \gamma $ represent the shift-by-one group action described above. Since $ \gamma $ generates $ C_N $ it suffices to prove that
		\[ \gamma\cdot f(Z) = f(\gamma\cdot Z). \]
		Let $ \gamma $ map the $ k $-th column of $ Z $ to $ \gamma(k) $. We have
		\[ \gamma\cdot f_{y_k}(Y,M) = f_{y_{\gamma(k)}}(Y,M) = \frac{1+m_{\gamma(k)}}{2}(u-y_{\gamma(k)})+ \frac{1-m_{\gamma(k)}}{2}\left(\frac{1}{N}\sum_{j\neq \gamma(k)}R^{j-\gamma(k)}y_j-\frac{N-1}{N}y_{\gamma(k)}\right) \]
		and
		\[ f_{y_k}(\gamma\cdot Y,\gamma\cdot M) = \frac{1+m_{\gamma(k)}}{2}(u-y_{\gamma(k)})+ \frac{1-m_{\gamma(k)}}{2}\left(\frac{1}{N}\sum_{j\neq k}R^{j-k}y_{\gamma(j)}-\frac{N-1}{N}y_{\gamma(k)}\right) \]
		Thus we must show that
		\[ \sum_{j\neq \gamma(k)}R^{j-\gamma(k)}y_j = \sum_{j\neq k}R^{j-k}y_{\gamma(j)}. \]
		One notes that $ (\gamma(j)-\gamma(k))\mod N = (j-k)\mod N $, and thus $ R^{\gamma(j)-\gamma(k)} = R^{j-k} $. Then by re-indexing the sum we may write
		\[ \sum_{j\neq \gamma(k)}R^{j-\gamma(k)}y_j = \sum_{j\neq \gamma(k)}R^{\gamma^{-1}(j)-k}y_j = \sum_{j\neq k}R^{j-k}y_{\gamma(j)}. \]
		This gives us $ \gamma\cdot f_{Y}(Y,M) = f_{Y}(\gamma\cdot Y,\gamma\cdot M) $. For $ f_V(V,Y) $ it suffices to show $ \vphi_{\gamma(k)}(Y) = \vphi_k(\gamma Y) $. This follows by applying the same argument as the one given above for $ f_Y $. Finally, one can see by inspection that $ \gamma\cdot f_{M}(M,V) = f_{M}(\gamma\cdot M,\gamma\cdot V) $. Thus we have that $ \forall \gamma'\in C_N $
		\[ \gamma'\cdot f(Z) = f(\gamma'\cdot Z). \]
	\end{proof}
	
	\begin{definition}\label{def:fixed-point-subspace}
		Consider a vector space $ \mathcal{V} $ with a well defined group action for elements of a group $ \Gamma $. We define the \emph{fixed point subspace} of a subgroup $ G\in\Gamma $ by
		\begin{equation}\label{eq:fixed-point-subspace}
		\text{\emph{Fix}}[G] = \left\{ x\in\mathcal{V}\vert~\gamma\cdot x = x,~ \forall\gamma\in G \right\}.
		\end{equation}
	\end{definition}
	
	We note that the fixed point subspace of $ C_N $ is the one generated by matrices $ Z $ with all columns the same (i.e. $ z_i = z_j~\forall i,j $). We will refer to this subspace as the \emph{symmetric subspace}. We will see later that said subspace possesses some very interesting properties.
	
	\subsection{Deadlock}\label{sec:sym-deadlock}
	The symmetric case displays many interesting behaviors for different $ (\sigma,\lambda) $ parameter regimes, and some of these boundaries between regimes admit direct analysis. One such boundary is that of the deadlock case.
	
	\begin{definition}\label{def:deadlock}
		We define deadlock as an equilibrium point $ Z_* $ of the system \eqref{eq:dZ} such that $ Z_* $ is asymptotically stable with $ |m_k| < 1 $ for all $ k $. (That is, all motivation variables are in a state of ``indecision".)
	\end{definition}
	
	Indeed, if we search for deadlock on the symmetric subspace, we find a candidate equilibrium.
	
	\begin{lemma} \label{lemma:deadlock}
		Let $ N\in\mathbb{N} $ and consider $ N\geq3 $. There is a potential deadlock equilibrium point on the symmetric subspace of \eqref{eq:dZ} which we will denote $ Z_* = (Y_*^T,M_*^T,V_*^T)^T $, given by
		\begin{equation}\label{eq:Z-equilibrium}
		y_k = (y^*,0)^T,\quad m_k = 2y^* - 1,\quad v_{k,1} = \frac{1}{2}(y^*-1)^2,\quad v_{k,2} = \frac{N-1}{2N}(y^*)^2, \quad \forall k=1,\dots,N 
		\end{equation}
		where $ \beta = y^* \in (0,1) $ solves
		\begin{equation}\label{eq:ystar-cubic}
		(2N-1)\beta^3 - (3N-1)\beta^2 - (N-1)\beta + N-1 = 0.
		\end{equation}
	\end{lemma}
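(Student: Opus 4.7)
My plan is to exploit the $C_N$-equivariance established in Lemma \ref{lemma:equivariant-group}: because $f$ is $C_N$-equivariant, the fixed-point subspace $\text{Fix}[C_N]$ (the symmetric subspace) is forward-invariant under the flow, so equilibria lying on it can be found by solving the reduced system obtained from substituting $y_k = y$, $m_k = m$, and $v_k = (v_1,v_2)$ for every $k$. I would then set each of $\dot y$, $\dot m$, $\dot v$ to zero componentwise on this subspace and identify the scalar unknowns.

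The key technical simplification is the identity $\sum_{l=0}^{N-1} R^l = 0$, which holds for $N\ge 2$ because the eigenvalues of $R$ are the $N$-th roots of unity. This yields $\sum_{l=1}^{N-1} R^l = -I$ and collapses the rendezvous coupling in $f_{y_k}$: on the symmetric subspace one gets $\frac{1}{N}\sum_{j\ne k} R^{j-k} y_j = -y/N$, so the whole rendezvous contribution reduces to $-y$. Then $\dot y = 0$ becomes
\[
\frac{1+m}{2}(u-y) - \frac{1-m}{2}y = 0,
\]
forcing $y = \frac{1+m}{2}u = (y^*,0)^T$ with $y^* = (1+m)/2$, equivalently $m = 2y^*-1$. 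Imposing $\dot v = 0$ gives $v_{k,i} = \vphi_{k,i}(Y)$ at equilibrium; the expression $\vphi_{k,1} = \tfrac12 (y^*-1)^2$ is immediate, and applying the same rotation-sum identity to $\vphi_{k,2}$ produces its stated dependence on $y^*$.

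To close the system I would impose $\dot m = 0$ together with the deadlock assumption $|m|<1$; this kills only the non-trivial factor in \eqref{eq:dmk}, forcing $m + 3\alpha = 0$ with $\alpha = (v_{k,1}-v_{k,2})/(v_{k,1}+v_{k,2})$. Substituting the expressions for $m$, $v_{k,1}$, $v_{k,2}$ obtained above in terms of $\beta := y^*$ and cross-multiplying produces, after collecting powers of $\beta$, exactly the cubic \eqref{eq:ystar-cubic}. To establish that at least one root lies in $(0,1)$ I would apply the intermediate value theorem to $P(\beta) = (2N-1)\beta^3 - (3N-1)\beta^2 - (N-1)\beta + (N-1)$: evaluating gives $P(0) = N-1 > 0$ and $P(1) = -N < 0$ for $N\ge 2$, so a root $\beta\in(0,1)$ exists.

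The only conceptually nontrivial step is recognizing and applying the rotation-sum identity that collapses the multi-agent rendezvous coupling on the symmetric subspace; the main obstacle is therefore purely bookkeeping, namely carrying out the cross-multiplication that yields the cubic without sign or degree errors. Everything else — the equivariance reduction, the algebra for $\vphi_{k,i}$, and the IVT step — is routine.
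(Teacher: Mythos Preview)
Your proposal is correct and follows essentially the same route as the paper: restrict to the symmetric subspace, impose $\dot v=0$ to get $v_{k,i}=\vphi_{k,i}(Y)$, impose $\dot m=0$ with $|m|<1$ to get $m=-3\alpha$, and reduce $\dot y=0$ to the cubic, with the rotation-sum identity $\sum_{l=1}^{N-1}R^l=-I$ doing the simplification (the paper leaves this identity implicit). The only substantive difference is that the paper evaluates $P$ at $\beta=-1,0,1,2$ to obtain sign changes $-3N,\,N-1,\,-N,\,3N-3$ and thereby shows that the root in $(0,1)$ is \emph{unique} (the other two real roots lie in $(-1,0)$ and $(1,2)$ and violate $|m|<1$); your IVT argument at $0$ and $1$ gives existence only, which suffices for the lemma as literally stated but is slightly weaker than what the paper proves.
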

	
	\begin{proof}
		We are considering the symmetry subspace, and so set $ z_1=z_2=\cdots z_N $. In order to be a deadlock equilibrium, $ |m_k|<1 $ by Definition \ref{def:deadlock}. Thus, we must have that $ m_k = -3(v_{k,1}-v_{k,2})/(v_{k,1}+v_{k,2}) $. Moreover, we have that $ v_{k,1} = \vphi_{k,1}(Y) $ and $ v_{k,2} = \vphi_{k,2}(Y) $. Substituting both into $ f_{y_k}(Y,M) = 0 $ will give the cubic in \eqref{eq:ystar-cubic}. Back substituting will give the equilibrium values for $ m_k $ and $ v_k $. 
		
		We must verify that there is exactly one root of \eqref{eq:ystar-cubic} in the interval $ (0,1) $. If we input $ \beta = -1,0,1,2 $ into the cubic we get $ -3N,N-1,-N,3N-3 $ respectively. We see that for $ N \geq 3 $ there are three consecutive changes in sign of the cubic. This implies that the three roots of the cubic are all real, and lie in the intervals $ (-1,0) $, $ (0,1) $, and $ (1,2) $. Roots which are greater than 1 and less than 0 will invalidate the $ |m|<1 $ assumption. Thus we must take the root in $ (0,1) $. \end{proof}
	
	A detailed numerical search suggests that the deadlock equilibrium described in Lemma \ref{lemma:deadlock} is unique. On the basis of this numerical evidence we make the following claim.
	
	\begin{claim}
		Let $N \geq 3$ and let $Z_*$ be the deadlock equilibrium defined in Lemma \ref{lemma:deadlock}. Then $Z_*$ is the unique equilibrium on the symmetry subspace.
	\end{claim}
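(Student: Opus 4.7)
The plan is to exploit that on the symmetric subspace every column of $Z$ equals a common representative $(y, m, v_1, v_2)\in\R^2\times[-1,1]\times\R_+^2$, so the flow \eqref{eq:dZ} descends to a five-dimensional ODE that admits closed-form analysis. First I would compute the reduced vector field. The key identity is $\sum_{k=0}^{N-1} R^{k} = 0$, which holds because $R$ is rotation by $2\pi/N$ and the $N$th roots of unity sum to zero; this yields $\sum_{j\neq k} R^{j-k} = -I$. Substituting into the expressions for $\dot y_k$ and $\vphi_{k,2}$ from Section \ref{sec:designated} collapses them to
\[ \dot y = \tfrac{1+m}{2}u - y, \]
while $\dot m = \sigma(1-m^2)(m+3\alpha)$ and $\dot v_i = \lambda(\vphi_i(y)-v_i)$ retain their form, with $\vphi_1(y) = \tfrac{1}{2}\|y-u\|^2$ and the symmetrized $\vphi_2(y)$ becoming a scalar multiple of $\|y\|^2$.

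Next I would solve the equilibrium system in the order $\dot y = 0$, $\dot v = 0$, $\dot m = 0$. The first equation forces $y = (y^*,0)$ with $y^* = (1+m)/2 \in [0,1]$; the second then pins $v_1$ and $v_2$ as explicit functions of $y^*$. The third factors as $\sigma(1-m^2)(m+3\alpha)$, which splits into two cases. If $|m|=1$, then $y^*\in\{0,1\}$ and one of the two values $v_1, v_2$ vanishes, placing the candidate state on the boundary of the value orthant $\R_+^2$. If $|m|<1$, then $m + 3(v_1-v_2)/(v_1+v_2) = 0$; using $m = 2y^* - 1$ together with the explicit expressions for $v_1, v_2$ reduces the relation to exactly the cubic \eqref{eq:ystar-cubic}, which by Lemma \ref{lemma:deadlock} has a unique root in $(0,1)$. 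This produces precisely the equilibrium $Z_*$ and no others.

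The main obstacle is the status of the boundary candidates $|m|=1$: they do satisfy the equilibrium equations formally, but each requires one $v_i$ to equal zero and therefore sits on the boundary of the phase space rather than in its interior. The uniqueness claim therefore rests on interpreting $\R_+$ as the strictly positive reals, so that equilibria must lie in the open interior of the stated state space; with that convention, the cubic analysis already carried out in Lemma \ref{lemma:deadlock} delivers uniqueness directly, with no further bifurcation-theoretic or topological input required.
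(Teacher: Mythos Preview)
The paper does not prove this Claim. Immediately before stating it, the authors write that ``[a] detailed numerical search suggests that the deadlock equilibrium described in Lemma~\ref{lemma:deadlock} is unique'' and that the Claim is made ``[o]n the basis of this numerical evidence.'' There is therefore no proof in the paper against which to compare your proposal.

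Your argument supplies an analytic proof where the paper offers only numerics, and it is essentially the same elimination already carried out in the proof of Lemma~\ref{lemma:deadlock}, sharpened by the observation that on the symmetric subspace the identity $\sum_{j\neq k} R^{j-k}=-I$ collapses the $y$-dynamics to $\dot y = \tfrac{1+m}{2}u - y$, so that \emph{every} equilibrium with $|m|<1$ (not merely a candidate one) is forced to satisfy the cubic \eqref{eq:ystar-cubic}. The sign-change count in Lemma~\ref{lemma:deadlock} then gives uniqueness of the root in $(0,1)$. Your handling of the boundary cases $m=\pm 1$ is also accurate: each forces one of the values $v_i$ to vanish, so the Claim as literally stated (``unique equilibrium'' rather than ``unique deadlock equilibrium'') does rest on either reading $\R_+$ as $(0,\infty)$ or invoking Definition~\ref{def:deadlock} to exclude $|m|=1$. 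With that caveat made explicit, your proof is complete.
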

	
	To study the stability of our system we must still consider a Jacobian matrix that is $ 5N\times 5N $. To get a better sense of the structure of the Jacobian, and how we might determine stability conditions, we will first consider the dynamics in the singularly perturbed limit of $ \lambda \to \infty $.
	
	\subsubsection{The $ \lambda \to \infty $ Limit}\label{sec:lam-limit}
	We split our system, denoting $ W = (Y^T,M^T)^T $ as our slow manifold variables, and the value state $V$ as our fast manifold variable. If we then take the $\lambda\to\infty$ limit we have that
	\[ v_{k,1} = \vphi_{k,1}(Y) \quad\text{and}\quad v_{k,2} = \vphi_{k,2}(Y). \]
	This results in the slow manifold system
	\begin{subequations} \label{eq:dzk-lim}
		\begin{align}
		\dot{y}_k &= f_{y_k}(Y,M) =  \frac{1+m_k}{2}(u-y_k) + \frac{1-m_k}{2}\left(\frac{1}{N}\sum_{j\neq k}R^{j-k}y_j - \frac{N-1}{N}y_k \right), \label{eq:dyk-lim}\\
		\dot{m}_k &= f_{m_k}(m_k,Y) = \sigma(1-m_k^2)\left(m_k+3\frac{\vphi_{k,1}(Y)-\vphi_{k,2}(Y)}{\vphi_{k,1}(Y)+\vphi_{k,2}(Y)}\right). \label{eq:dmk-lim}
		\end{align}
	\end{subequations}
	We will compactly represent the system by
	\begin{equation}\label{eq:dW}
	\dot{W} = \tilde{f}(W).
	\end{equation}
	and we denote the corresponding deadlock point for the reduced system by
	\begin{equation}\label{eq:W-equilibrium}
	W_* = (Y_*^T,M_*^T)^T,    
	\end{equation}
	where $ Y_* $ and $ M_* $ are still the same as in \eqref{eq:Z-equilibrium}. In Appendix \ref{apdx:limit-case-jacobian} we derive that the Jacobian $ D\tilde{f} $ can be represented compactly, and that  by \eqref{eq:det-Df-simple-limit} we may write the characteristic polynomial as
	\begin{equation}\label{eq:char-poly-lim}
	\det(D\tilde{f}-\mu I_{3N}) = \det\left(\tilde{M} + \sum_{k=1}^{N}\tilde{D}_k adj(\tilde{A}-\tilde{B}-\mu I_3)\tilde{C}_k\right)\det\left(\tilde{A}-\tilde{B}-\mu I_3\right)^{N-2} 
	\end{equation}
	This leads us to establish our first result
	\begin{theorem}\label{thm:deadlock-lim}
		There exists a critical value $ \sigma = \sigma^* $ for which the singularly-perturbed system \eqref{eq:dW} experiences a bifurcation which breaks the stability of the deadlock point \eqref{eq:W-equilibrium}:
		\begin{equation}\label{eq:deadlock-sigma-limit}
		\sigma^* \coloneqq \frac{1}{4y^*(1-y^*)}.
		\end{equation}
	\end{theorem}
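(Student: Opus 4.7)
The plan is to find the value of $\sigma$ at which a real eigenvalue of $D\tilde{f}$ at the deadlock point $W_*$ crosses through zero, using the factored characteristic polynomial \eqref{eq:char-poly-lim}. Setting $\mu = 0$ in \eqref{eq:char-poly-lim} reduces the bifurcation condition to the vanishing of one of two factors:
\begin{equation*}
\det\!\left(\tilde{M} + \sum_{k=1}^{N}\tilde{D}_k\,\mathrm{adj}(\tilde{A}-\tilde{B})\,\tilde{C}_k\right) \det(\tilde{A}-\tilde{B})^{N-2} = 0.
\end{equation*}
The second factor depends only on the physical/coupling blocks, which at the symmetric equilibrium are pinned by $y^*$ alone; by inspection it does not vanish in the regime of interest (I would verify this by direct computation from Appendix~\ref{apdx:limit-case-jacobian}, using that $y^*\in(0,1)$). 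All $\sigma$-dependence therefore sits in the first factor, through the block $\tilde{M}$ which encodes the linearization of $f_{m_k}$.

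The key observation driving the scaling is the following calculation at $W_*$. Differentiating $\dot m_k = \sigma(1-m_k^2)(m_k+3\alpha_k)$ and using the equilibrium relation $m_k+3\alpha_k=0$, every entry of the motivation row of the Jacobian picks up a common factor $\sigma(1-m_k^2)$. Substituting $m_k = 2y^*-1$ from Lemma~\ref{lemma:deadlock} gives
\begin{equation*}
1 - m_k^2 = 1 - (2y^*-1)^2 = 4y^*(1-y^*),
\end{equation*}
so the motivation block carries the uniform factor $4\sigma\,y^*(1-y^*)$. The first step of the proof is therefore to read off this overall factor from $\tilde M$ (and from the $\sigma$-dependent parts of the $\tilde D_k,\tilde C_k$ blocks, if any) using the explicit formulas in Appendix~\ref{apdx:limit-case-jacobian}.

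Next, I would exploit the $C_N$-equivariance from Lemma~\ref{lemma:equivariant-group}: since $W_*$ lies in the symmetric subspace, the Jacobian commutes with the cyclic action and thus block-diagonalizes over the isotypic components of $C_N$. The $(N-2)$-fold factor $\det(\tilde A - \tilde B - \mu I_3)^{N-2}$ in \eqref{eq:char-poly-lim} is exactly the contribution of the non-symmetric isotypic components, whereas the other factor collects the eigenvalues along the symmetric subspace itself. Deadlock stability is lost through a zero eigenvalue along the symmetric subspace, so the bifurcation condition reduces to a scalar equation in $\sigma$ and $y^*$. After extracting the common factor $4\sigma\,y^*(1-y^*)$ from the relevant rows, the remaining coefficient evaluates, after the back-substitution $v_{k,1} = (y^*-1)^2/2$, $v_{k,2} = (N-1)(y^*)^2/(2N)$, to a nonzero rational function of $y^*$ (using \eqref{eq:ystar-cubic} to simplify). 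Solving yields
\begin{equation*}
4\sigma^*\,y^*(1-y^*) = 1 \quad\Longleftrightarrow\quad \sigma^* = \frac{1}{4y^*(1-y^*)}.
\end{equation*}

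The main obstacle will be the bookkeeping in the first factor: the adjugate $\mathrm{adj}(\tilde A-\tilde B)$ and the sum $\sum_k \tilde D_k\,\mathrm{adj}(\tilde A - \tilde B)\,\tilde C_k$ must be computed explicitly, and one must verify that, after extracting $4\sigma y^*(1-y^*)$, the residual factor multiplying $\sigma^*-\sigma$ is nonzero so that the zero is simple and corresponds to an honest bifurcation rather than an accidental degeneracy. I would handle this by diagonalizing on the symmetric subspace first (reducing $3N$ variables to the three per-agent slow variables $(y_k,m_k)$ identified across $k$), which collapses the computation to a $3\times 3$ determinant whose $\sigma$-linearity is transparent.
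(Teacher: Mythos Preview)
Your proposal has a genuine gap: the bifurcation at $\sigma^*$ is a \emph{Hopf} bifurcation, not a zero-eigenvalue crossing. Setting $\mu=0$ in \eqref{eq:char-poly-lim} will not detect it. In the paper's proof, the factor $G_1(\mu;\sigma)=\det(\tilde A-\tilde B-\mu I_3)$ is analyzed directly; it factors as $(\mu+1)$ times a quadratic whose roots $\mu_{2,3}$ have real part $(4\sigma y^*(1-y^*)-1)/2$ and are shown to be genuinely complex at $\sigma=\sigma^*$. The critical value therefore comes from the vanishing of this real part, not from any determinant condition at $\mu=0$. Indeed one can check that $G_1(0;\sigma)=\sigma\bigl[4y^*(1-y^*)-2(1-y^*)^2(1+y^*)/(\vphi_1^*+\vphi_2^*)\bigr]$, and the bracket is strictly negative (since $y^*<1/2$ and $\vphi_i^*<1/4$), so no real eigenvalue crosses zero through this factor for any $\sigma>0$.

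Two related misreadings feed the gap. First, the $(N-2)$-fold factor $\det(\tilde A-\tilde B-\mu I_3)^{N-2}$ is \emph{not} $\sigma$-independent: the block $\tilde A$ in \eqref{eq:A-tilde} carries $\sigma$ in its third row, and it is precisely this factor (not the $(G_1+G_2)^2$ piece) through which the paper establishes the instability. Second, $\tilde M$ in Lemma~\ref{lemma:det-reduction-limit} is not the linearization of the motivation dynamics; it is an auxiliary matrix arising from the Matrix Determinant Lemma, with $\det(\tilde A-\tilde B-\mu I_3)$ on its first two diagonal entries. Your observation that $1-m_k^2=4y^*(1-y^*)$ is exactly right and is the source of the formula, but the correct mechanism is that this quantity is the diagonal entry $\partial\dot m_k/\partial m_k$, which sets the trace of the relevant $2\times 2$ subblock to $4\sigma y^*(1-y^*)-1$; Hopf occurs when this trace vanishes, together with verification that the discriminant is negative and that the crossing is transverse.
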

	
	\begin{proof}
		From Lemma \ref{lemma:char-poly-limit}, we have a compact form for the characteristic polynomial given by
		\[ \det(D\tilde{f}-\mu I_{3N}) = (G_1(\mu;\sigma)+G_2(\mu;\sigma))^2(G_1(\mu;\sigma))^{N-2}, \]
		where $ G_1 $ and $ G_2 $ are given in the statement of Theorem \ref{lemma:char-poly-limit}. We note that both $ G_1(\mu;\sigma)+G_2(\mu;\sigma) $ and $ G_1(\mu;\sigma) $ are cubic in $ \mu $. Thus, one can write closed-form expressions for their roots. The expression for the roots of $ G_1(\mu;\sigma)+G_2(\mu;\sigma) $ is very complicated, and does not admit direct analysis of the critical $ \sigma $ value for bifurcation. However, $ G_1(\mu;\sigma) $ factors nicely across $ \mu $. Specifically, it may be written as
		\[ (\mu+1)\left[ (\mu+1)(4\sigma y^*(1-y^*)-\mu) - \frac{2\sigma (1-y^*)^2(1+y^*)}{\vphi_1^* + \vphi_2^*}\right], \]
		where $ \vphi_i^* = \vphi_{k,i}(Y_*) $ for $ i=1,2 $ are the task function values at the deadlock point. Direct computation of the roots gives
		\[ \begin{aligned}
		\mu_1 &= -1 \\
		\mu_{2,3} &= \frac{1}{2}\left( 4\sigma y^*(1-y^*) - 1 \pm \sqrt{(4\sigma y^*(1-y^*) - 1)^2 + 16\sigma y^*(1-y^*) - \frac{8\sigma (1-y^*)^2(1+y^*)}{\vphi_1^* + \vphi_2^*}} \right)
		\end{aligned} \]
		If we can show that 
		\[ 16\sigma y^*(1-y^*) - \frac{8\sigma (1-y^*)^2(1+y^*)}{\vphi_1^* + \vphi_2^*} < 0 \]
		then we can guarantee that $ \mu_{2,3} $ have negative real parts for $ \sigma < 1/(4y^*(1-y^*)) $. Noting that $8 \sigma y^* (1-y^*) > 0$ can be factored from the expression above, it suffices to show
		\[ 2y^* - \frac{1-(y^*)^2}{\vphi_1^* + \vphi_2^*} < 0, \]
		which is the case, as $ \vphi_i^* < 1/4 $ for $ i=1,2 $ and $ y^* < 1/2 $ (this can be checked by looking for sign switches in the left-hand-side of \eqref{eq:ystar-cubic} on the interval $ (0,1/2) $).
		
		Next we must show that the eigenvalues are complex, i.e.
		\[ (4\sigma y^*(1-y^*) - 1)^2 + 16\sigma y^*(1-y^*) - \frac{8\sigma (1-y^*)^2(1+y^*)}{\vphi_1^* + \vphi_2^*} < 0  \]
		for $ \sigma = 1/(4y^*(1-y^*)) $. It suffices to show
		\[ 2 - \frac{1-(y^*)^2}{y^*(\vphi_1^* + \vphi_2^*)} < 0. \]
		This once again holds due to the fact that $ \vphi_i^* < 1/4 $ for $ i=1,2 $ and $ y^* < 1/2 $.
		
		It remains to show the non-tangency condition of our roots, i.e., that the roots pass through the imaginary axis. The real part of the eigenvalues is given by $(4\sigma y^* (1- y^*) - 1)/2$, so the derivative with respect to $\sigma$ is $2 y^*(1-y^*)$. This is positive since $y^* \in (0, 1)$.

	\end{proof}

	\subsubsection{The case of finite $ \lambda $}\label{sec:lam-relax}
	It is possible to derive an explicit form for the characteristic polynomial of the relaxed system, but that polynomial would still be of degree 5, and in general would require numerical computations to find the roots. Indeed, given a fixed value of $\lambda$, we can numerically approximate the deadlock-breaking value of $ \sigma $, and vice versa. The results of such a computation are shown in Figure \ref{fig:crit-sigma-vs-lambda}. In place of deriving an exact form for the characteristic polynomial, we prove a lower bound on $ \sigma $ that guarantees deadlock-breaking.
	
	\begin{theorem}\label{thm:deadlock-lower-bound}
		There exists a finite lower bound $ \hat{\sigma} $ such that $ \sigma > \hat{\sigma} $ implies that the equilibrium point $ Z_* $ of the system \eqref{eq:dZ} is not stable. The bound is given by
		\begin{equation}\label{eq:deadlock-lower-bound}
		\hat{\sigma} = \frac{N\lambda + N + y^* - 1}{2Ny^*(1-y^*)}
		\end{equation}
		where $ y^* $ is the solution in the interval $ (0,1) $ to \eqref{eq:ystar-cubic}.
	\end{theorem}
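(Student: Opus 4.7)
The plan is to exploit the $C_N$-equivariance from Lemma~\ref{lemma:equivariant-group} to block-diagonalize the $5N \times 5N$ Jacobian $Dg(Z_*)$ at the deadlock equilibrium. Because $Z_*$ lies in the symmetric subspace, the Jacobian commutes with the cyclic permutation action, and the isotypic decomposition of $\R^{5N}$ under $C_N$ splits its spectrum into $N$ Fourier blocks $J_\omega$, each $5 \times 5$ (combining the entries $y_k \in \R^2$, $m_k \in \R$, $v_k \in \R^2$ of a single agent). In the singular limit of Theorem~\ref{thm:deadlock-lim}, the $N-2$ non-symmetric modes produced the $G_1(\mu;\sigma)$ factor whose roots crossed the imaginary axis at $\sigma^*$; the finite-$\lambda$ analogs are the blocks $J_\omega$ for $\omega \neq 1$, and deadlock breaking requires an eigenvalue of one of these blocks to enter the right half-plane.

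I would construct $J_\omega$ by evaluating the Jacobian of \eqref{eq:expanded_dzk} at $Z_*$ and replacing each cross-agent sum $\sum_{j\neq k} R^{j-k}(\cdot)_j$ by its Fourier image at $\omega$. The value sub-block is $-\lambda I_2$, coupled to $y$ through $\lambda\, D\vphi_k(Y_*)$. Taking a Schur complement to eliminate the value variables reduces the spectral problem to an effective $3 \times 3$ block $\tilde{J}_\omega(\mu)$ whose determinant, multiplied by $(\mu+\lambda)^2$, gives the characteristic polynomial $p_\omega(\mu;\sigma,\lambda,y^*)$ of $J_\omega$. This polynomial is a quintic in $\mu$, linear in $\sigma$ and $\lambda$, and recovers the limiting $G_1$ factor as $\lambda \to \infty$. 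Rather than computing all roots, the plan is to apply a single sufficient condition for an eigenvalue to have positive real part, and then solve the resulting inequality in $\sigma$. The most natural candidate is that the constant term $p_\omega(0;\sigma,\lambda,y^*)$ changes sign (forcing a real eigenvalue to cross zero) or that a Hurwitz subdeterminant of $p_\omega$ vanishes (forcing a complex conjugate pair to cross). Given the linear dependence of \eqref{eq:deadlock-lower-bound} on $\lambda$, I expect one of these two criteria to yield $\hat{\sigma}$ directly once the computation is simplified using the defining cubic \eqref{eq:ystar-cubic} for $y^*$.

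The main obstacle is the algebraic bookkeeping in reducing the $5 \times 5$ block: the Schur complement introduces rational dependence on $\mu$ that must be cleared before any Hurwitz criterion can be applied, and the resulting expressions in $y^*$ require repeated use of \eqref{eq:ystar-cubic} to simplify. Continuity of eigenvalues in $\sigma$ together with Theorem~\ref{thm:deadlock-lim} (which confirms an unstable regime in the $\lambda\to\infty$ limit) then upgrades the equality threshold to the open sufficient condition $\sigma > \hat{\sigma}$ stated in the theorem. A concrete risk is that \eqref{eq:deadlock-lower-bound} is not the sharp crossing value but rather a deliberate overestimate chosen to admit a clean closed form---consistent with its growth as $\lambda \to \infty$, which is strictly worse than the tight $\sigma^*$ of Theorem~\ref{thm:deadlock-lim}. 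In that case the proof likely proceeds by bounding the determinant of $J_\omega$ crudely, exploiting the sign of the $\lambda$-proportional diagonal contribution, rather than computing the exact Hurwitz threshold.
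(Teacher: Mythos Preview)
Your plan is dramatically more elaborate than what the theorem actually requires, and it misses the single idea that produces the specific bound~\eqref{eq:deadlock-lower-bound}. The paper's proof is two lines: compute the trace of the full $5N\times 5N$ Jacobian at $Z_*$,
\[
\operatorname{tr}(Df) \;=\; 2\bigl(1 - y^* - N - N\lambda\bigr) + 4N\sigma\, y^*(1-y^*),
\]
and observe that since the trace is the sum of the eigenvalues, $\operatorname{tr}(Df)>0$ forces at least one eigenvalue into the open right half-plane. Solving $\operatorname{tr}(Df)=0$ for $\sigma$ gives exactly $\hat\sigma$. No block diagonalization, no Schur complements, no Hurwitz subdeterminants, and no appeal to Theorem~\ref{thm:deadlock-lim} are needed.

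Your suspicion that $\hat\sigma$ is a deliberate overestimate is correct, and your closing remark about ``the $\lambda$-proportional diagonal contribution'' is tantalizingly close, but you never name the trace. The Fourier--Hurwitz program you outline would, if carried through, yield the \emph{sharp} bifurcation curve (which the paper only computes numerically, cf.\ Figure~\ref{fig:crit-sigma-vs-lambda}); it would not naturally produce the coarser expression~\eqref{eq:deadlock-lower-bound}, because that expression is an artifact of the trace criterion rather than of any single Fourier block. So while your machinery is sound in principle for analyzing stability, it is aimed at a harder problem than the one stated, and the algebraic bookkeeping you anticipate as the ``main obstacle'' is entirely avoidable here.
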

	
	\begin{proof}
		By Lemma \ref{lemma:Df-trace} we have that
		\[ \text{tr}(Df) = 2\left( 1-y^* - N - N\lambda \right) + 4N\sigma y^*(1-y^*). \]
		The trace of a matrix is the sum of the eigenvalues, thus if the trace has positive real part there mus be at least one eigenvalue with positive real part. Setting $ \text{tr}(Df) \geq 0 $ and solving for $ \sigma $ gives the desired lower bound.
	\end{proof}
	
	\begin{figure}[h]
		\begin{center}
			\includegraphics[scale=0.8]{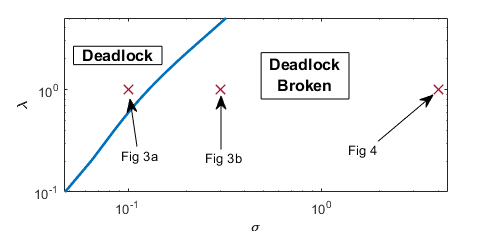}
		\end{center}
		\caption{The parameter space split into deadlock and non-deadlock regimes. Here we are considering $ N=3 $ agents.}\label{fig:crit-sigma-vs-lambda}
	\end{figure}
	
	As a demonstration we have included two examples of a subset of the state trajectories under deadlock and non-deadlock in Figure \ref{fig:deadlock-example}. We track the $ x_{k,1} $, $ m_k $ and $ \alpha_k $ states and we plot all three agents together at once. Here $ \alpha_k = (v_{k,1}-v_{k,2})/(v_{k,1}+v_{k,2}) $, which is the normalized difference between value states.
	
	\begin{figure}
		\centering
		\begin{subfigure}[b]{0.48\textwidth}
			\centering
			\includegraphics[width=\textwidth]{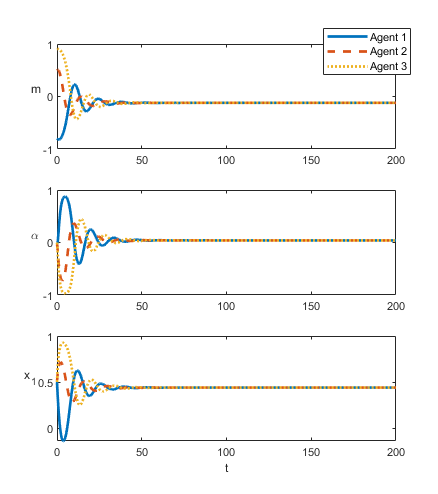}
			\caption{Deadlock ($\sigma = 0.1$)}
		\end{subfigure}
		\hfill
		\begin{subfigure}[b]{0.48\textwidth}
			\centering
			\includegraphics[width=\textwidth]{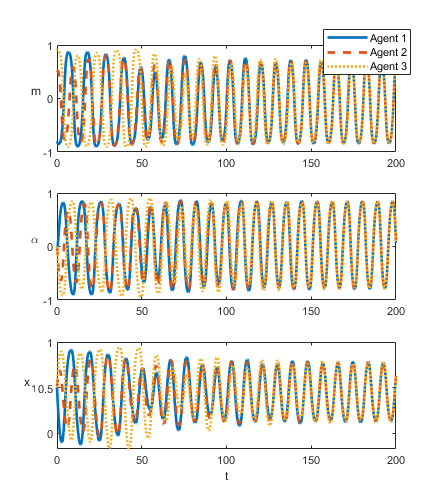}
			\caption{No Deadlock ($\sigma = 0.3$)}
		\end{subfigure}
		\caption{Comparison of deadlock and non-deadlock trajectories. In both cases $ \lambda = 1 $ and the initial conditions are the same.}\label{fig:deadlock-example}
	\end{figure}

	\subsection{Syncronicity and Asynchronicity}
	When we consider $ \sigma > \hat{\sigma} $, as given in \eqref{eq:deadlock-lower-bound}, and finite $ \lambda $ there are interesting dynamical regimes that emerge. Specifically, the system produces both synchronous and asynchronous rendezvous, dependent on initial conditions and parameter regimes. At this point, further direct analysis of the system becomes far less tractable, and yields very little information. Instead we will investigate these regimes numerically. For the remainder of this section we will consider the case $ N=3 $, as it is the simplest case that still experiences rich behavior.
	
	Once we are in the deadlock broken regime (See Figure \ref{fig:crit-sigma-vs-lambda}) the system can experience either synchronous or asynchronous rendezvous (See Figure \ref{fig:sync-vs-async}). Synchronicity for fixed $ \sigma $ and $ \lambda $ depends only on the initial conditions of the system. We should stress the system still experiences recurrent rendezvous, even in the asynchronous case. One way to see this is to consider the average distance to the centroid across all agents as it varies over time. We define the rendezvous metric by
	
	\begin{equation}\label{eq:rendezvous-metric}
	d(X(t)) \coloneqq \frac{1}{N}\sum_{k=1}^{N}\left\| x_k(t) - \frac{1}{N}\sum_{j=1}^N x_j(t) \right\|,
	\end{equation}
	
	or for the transformed variable $Y$ we have
	
	\begin{equation}\label{eq:rendezvous-metric-Y}
	\tilde{d}(Y(t)) \coloneqq \frac{1}{N}\sum_{k=1}^{N}\left\| y_k(t) - \frac{1}{N}\sum_{j=1}^N R^{j-k}y_j(t) \right\|.
	\end{equation}
	
	We plot two examples in Figure \ref{fig:sync-vs-async-metric}. 
	
	\begin{figure}
		\centering
		\begin{subfigure}[b]{0.48\textwidth}
			\centering
			\includegraphics[width=\textwidth]{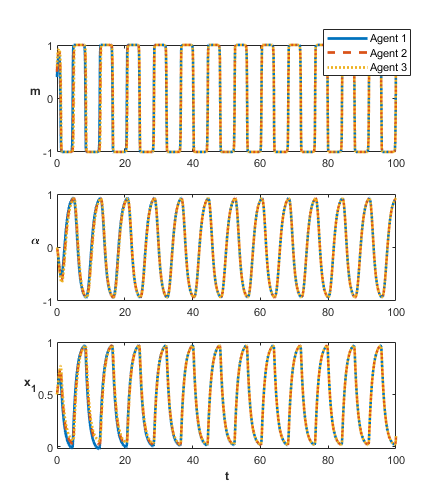}
			\caption{Synchronous Rendezvous}
		\end{subfigure}
		\hfill
		\begin{subfigure}[b]{0.48\textwidth}
			\centering
			\includegraphics[width=\textwidth]{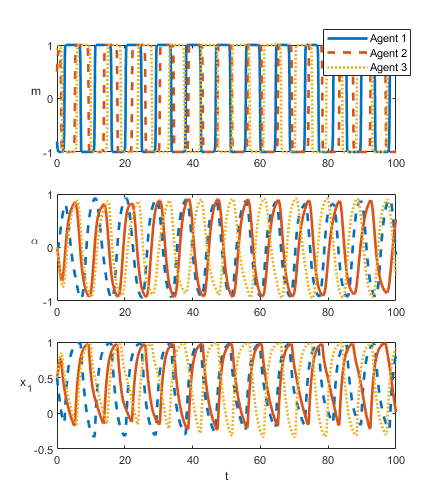}
			\caption{Asynchronous Rendezvous}
		\end{subfigure}
		\caption{Comparison of synchronous and asynchronous trajectories. In both cases $\sigma = 4$ and $ \lambda = 1 $. The only difference is in the initial conditions.}\label{fig:sync-vs-async}
	\end{figure}
	
	\begin{figure}
		\centering
		\begin{subfigure}[b]{0.48\textwidth}
			\centering
			\includegraphics[width=\textwidth]{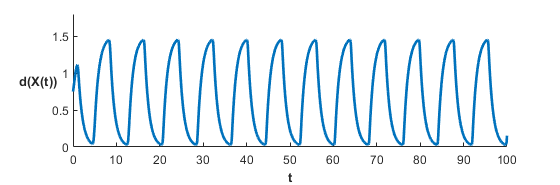}
			\caption{Synchronous Rendezvous}
		\end{subfigure}
		\hfill
		\begin{subfigure}[b]{0.48\textwidth}
			\centering
			\includegraphics[width=\textwidth]{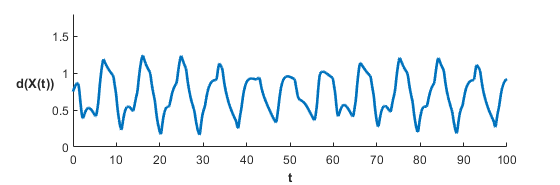}
			\caption{Asynchronous Rendezvous}
		\end{subfigure}
		\caption{Comparison of the rendezvous metric given in \eqref{eq:rendezvous-metric} for the synchronous and asynchronous trajectories from Figure \ref{fig:sync-vs-async}.}\label{fig:sync-vs-async-metric}
	\end{figure}
	
	Despite not performing as well as the synchronous case, the asynchronous case shows that the agents repetitively get close to one another. Under reasonably relaxed constraints this constitutes recurrent rendezvous. In fact, numerical evidence supports that there is an attractor of the system on which the asynchronous orbits live. The attractor itself is 15 dimensional, and so we can only plot slices of the state space. We consider the $ \alpha = (\alpha_1,\alpha_2,\alpha_3) $ space of the 3 agent system, recalling that $ \alpha_k = (v_{k,1}-v_{k,2})/(v_{k,1}+v_{k,2}) $, and the $ m = (m_1, m_2, m_3) $ space. Plotting trajectories in both spaces yields Figure \ref{fig:async-attractors}.
	
	\begin{figure}
		\begin{center}
			\includegraphics[width=0.48\linewidth]{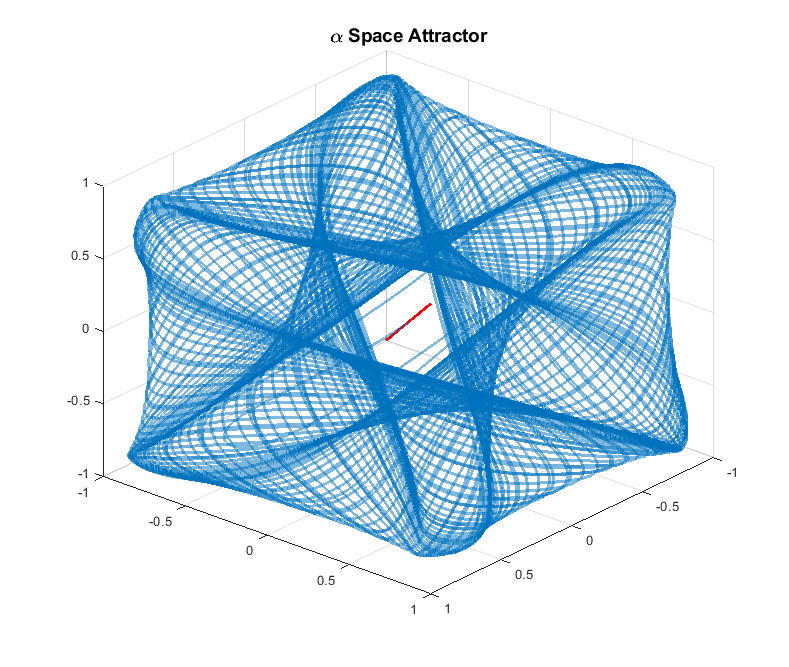} \hfill
			\includegraphics[width=0.48\linewidth]{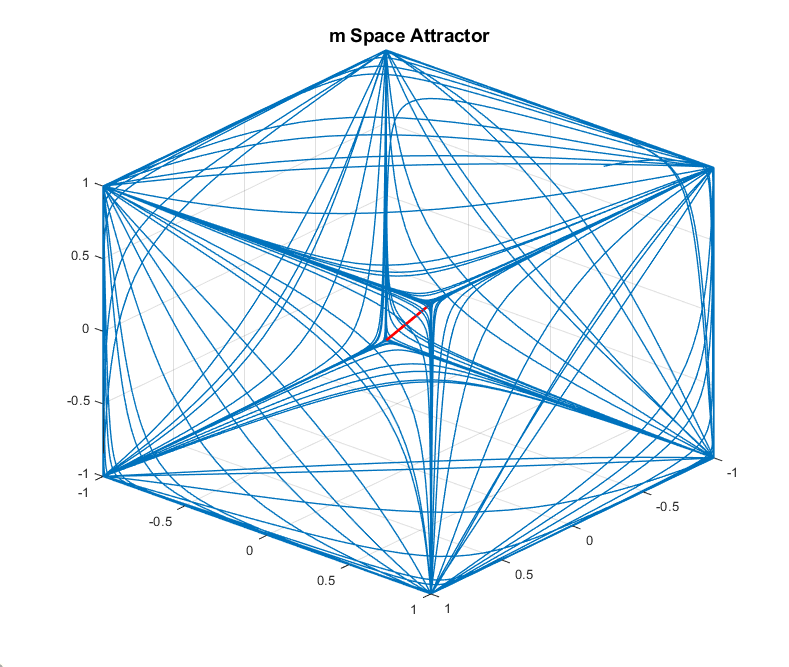}
		\end{center}
		\caption{Projections of the attractor that generates asynchronous trajectories. The left plot is a projection of the attractor into $ \alpha $-space and the right plot is a projection of the attractor into $ m $-space. For both plots the red line indicates the projection of the synchronous subspace (i.e. the $ z_1=z_2=z_3 $ subspace). Here $ \lambda = 1 $.}\label{fig:async-attractors}
	\end{figure}
	
	We can further characterize this attractor by taking a Poincar\'{e} section in the $ \alpha $ space. Consider the vector $ \alpha^* = (1,1,1) $. We construct our section by tracking when $ \alpha(t) \cdot \alpha^* = 0 $, i.e. when the trajectory punctures the plane defined by $ \alpha^* $. In addition, if we track the sign change we can see that two limit cycles emerge in the section. Let $ \tau_j \in [0,T] $ indicate the $ j $-th point in time such that $ \alpha(\tau_j) \cdot \alpha^* = 0 $. If we plot the even and odd indices separately, i.e. plot $ \alpha(\tau_{2\ell}) $ and $ \alpha(\tau_{2\ell-1}) $ for $ \ell = 1,2,\dots $, we see that two limit cycles emerge in the Poincar\'e section (See Figure \ref{fig:poincare}). These limit cycles further support the claim that the orbits are living on an attractor.
	
	\begin{figure}
		\begin{center}
			\includegraphics[width=0.6\linewidth]{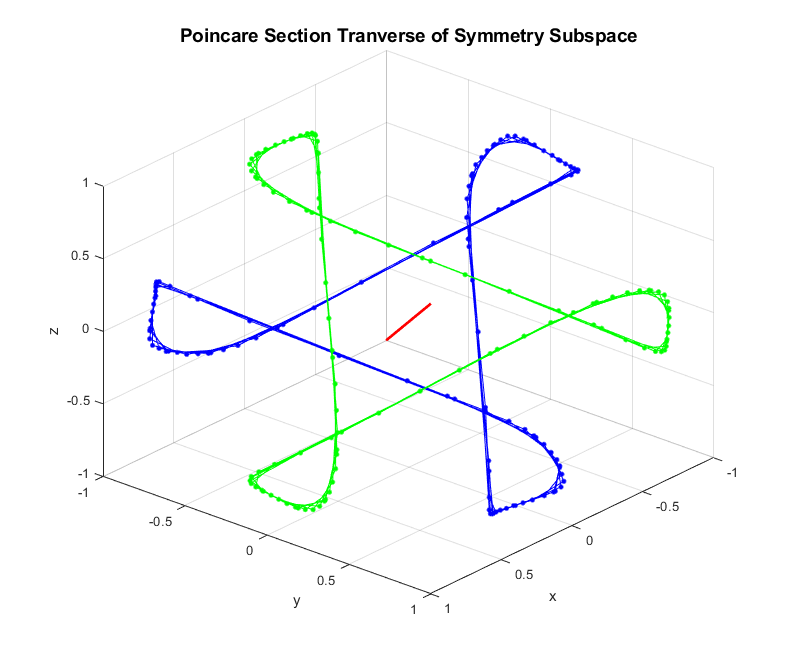} 
		\end{center}
		\caption{We plot the even and odd indices of $ \alpha(\tau_{j}) $. The points indicate the actual punctures of the Poincar\'e section, while the lines connecting them indicate adjacency in the ordering. Blue indicates the odd indices, and green indicates the even indices. The red line is the symmetry subspace $ \alpha_1 = \alpha_2 = \alpha_3 $. Here $ \lambda = 1 $.}\label{fig:poincare}
	\end{figure}
	
	This attractor leads us to make the following claim:
	
	\begin{claim}\label{claim:attractor}
		The system \eqref{eq:dZ} has an attractor $\mathcal{A}_{\sigma,\lambda} \subset \R^2\times[-1,1]^N\times\R_+^{2N}$ whose characteristics depend on the parameters $\sigma,\lambda$.
		
		Moreover, there exists a region of the parameter space such that $\mathcal{A}_{\sigma,\lambda}$ is not on the synchronous manifold, and is not a fixed point.
	\end{claim}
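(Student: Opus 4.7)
The plan is to split the statement into existence of an attractor and, in a suitable parameter range, the asserted structural features.

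For existence, I would exhibit a compact forward-invariant set $K$ and take its $\omega$-limit. Three elementary observations give the trap: the factor $(1-m_k^2)$ in \eqref{eq:dmk} makes $[-1,1]^N$ forward-invariant in the motivation coordinates; the contraction $\lambda(\varphi_k(X)-v_k)$ in \eqref{eq:dvk} together with $\varphi_{k,i}\ge 0$ drives the value block into a ball whose radius depends only on an a priori bound for $\varphi_k$; and the position flow \eqref{eq:dxk} is a convex combination of two linear contractions, to the point $u$ and to the centroid of the other agents, so a sufficiently large ball in $X$ is forward-invariant. Setting $\mathcal{A}_{\sigma,\lambda}:=\bigcap_{t\ge 0}\overline{\bigcup_{s\ge t}\phi_s(K)}$ then gives a non-empty, compact, invariant attractor whose $\sigma,\lambda$-dependence is inherited from the flow $\phi_s$.

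For the structural part I would fix $\sigma>\hat\sigma$ so that the deadlock equilibrium $Z_*$ is unstable by Theorem \ref{thm:deadlock-lower-bound}. To exclude $\mathcal{A}_{\sigma,\lambda}$ from being a single point, I would argue that any equilibrium of \eqref{eq:dZ} with all $|m_k|<1$ must satisfy $m_k=-3\alpha_k$ and $v_{k,i}=\varphi_{k,i}(X)$; under $C_N$ equivariance (Lemma \ref{lemma:equivariant-group}) the only symmetric solution is $Z_*$, by the uniqueness claim stated after Lemma \ref{lemma:deadlock}. Equilibria with some $|m_k|=1$ are ``committed'' equilibria in which the affected agent has reached its task fixed point and its value settles to $\varphi_k$; substituting back into \eqref{eq:dmk} shows the cubic factor is generically non-zero, so any such putative equilibrium drifts under the flow. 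Combined with the instability of $Z_*$, this rules out any stable fixed point, so $\mathcal{A}_{\sigma,\lambda}$ cannot be a singleton.

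To show $\mathcal{A}_{\sigma,\lambda}\not\subset S:=\mathrm{Fix}[C_N]$, which is invariant by Lemma \ref{lemma:equivariant-group}, I would compute the transverse Lyapunov spectrum of trajectories confined to $S$ using the block-diagonalization of the Jacobian afforded by the irreducible representations of $C_N$; this is the same decomposition that produced \eqref{eq:char-poly-lim} in the $\lambda\to\infty$ limit. Each non-trivial representation yields a transverse block whose characteristic polynomial one can track, and I would look for parameters where the largest real part among these transverse eigenvalues is strictly positive along the on-manifold attractor. The hardest step is the absence of a closed-form on-manifold trajectory at finite $\lambda$, so a fully rigorous argument would either restrict to the singularly perturbed limit of Section \ref{sec:lam-limit}, where the factorization \eqref{eq:char-poly-lim} already isolates the transverse factors, and then appeal to persistence of transverse instability under small $1/\lambda$ perturbations; or, as the paper does, supplement the analytical bound with the Poincar\'{e} and attractor-projection evidence of Figures \ref{fig:async-attractors} and \ref{fig:poincare}.
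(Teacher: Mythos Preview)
The paper does not prove Claim~\ref{claim:attractor}. It is stated explicitly as a claim supported only by the numerical evidence in Figures~\ref{fig:async-attractors} and~\ref{fig:poincare}, and the Discussion section lists ``characterizing the attractor whose existence we postulate in Claim~\ref{claim:attractor}'' as an open problem. So there is no proof to compare against: your proposal is an attempt to supply an argument where the paper offers none, and you correctly flag at the end that the paper falls back on numerical evidence.

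On the merits of your sketch: the trapping-region argument for existence is reasonable in outline, but the order of the bounds matters. Your bound on the value block depends on an a priori bound for $\varphi_k(X)$, which in turn requires the $X$-bound you only assert afterwards; and the $X$-dynamics are not a convex combination of two contractions in the usual sense, since $F_{k,2}$ couples all agents and is not contracting in $x_k$ alone. A cleaner route is a joint Lyapunov function on $X$ (e.g., total squared distance to the centroid plus distance of the centroid to the convex hull of the $x_k^*$), after which the $V$-bound follows. For the structural part, two of your steps rest on unproved claims from the paper itself: uniqueness of the symmetric deadlock equilibrium is only the numerically-supported Claim following Lemma~\ref{lemma:deadlock}, and your dismissal of $|m_k|=1$ equilibria as ``generically non-zero'' is not a proof---one must actually check that no committed equilibrium is asymptotically stable. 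Finally, your transverse-instability idea via the $C_N$ isotypic decomposition is the right tool, and you are honest that at finite $\lambda$ it does not close without either a perturbation argument from the $\lambda\to\infty$ limit or the same numerical evidence the paper invokes. In short, your proposal is a credible roadmap toward a proof that the paper does not attempt, but several steps remain genuinely open.
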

	
	This allows us to postulate a theorem.
	
	\begin{theorem}\label{thm:eventual_rendez}
		Assume that Claim \ref{claim:attractor} holds. Let $Z(t)$ be a solution to \eqref{eq:dZ}. Then fix $ \vep > 0 $, and consider the rendezvous metric from \eqref{eq:rendezvous-metric-Y}. If $Z(0) \in \mathcal{A}_{\sigma,\lambda}$ and there exists a time $ t_1 \geq 0 $ such that $ \tilde{d}(Y(t_1)) = \delta > 0 $, then there exists a time $ t_2 > t_1 $ such that $ \tilde{d}(Y(t_2)) < \delta + \vep $.
	\end{theorem}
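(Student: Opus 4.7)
The statement as written follows from a simple continuity argument, since the composed map $t \mapsto \tilde{d}(Y(t))$ is continuous (both the flow of \eqref{eq:dZ} and $\tilde{d}$ are continuous): from $\tilde{d}(Y(t_1)) = \delta$ one picks any $t_2 \in (t_1, t_1 + \eta)$ with $\eta$ small enough to obtain $\tilde{d}(Y(t_2)) < \delta + \vep$. To make the result substantive and genuinely use the attractor structure posited in Claim~\ref{claim:attractor}, I would interpret the conclusion as the recurrence statement $|\tilde{d}(Y(t_2)) - \delta| < \vep$ for arbitrarily large $t_2$; this captures the asynchronous recurrence illustrated in Figures~\ref{fig:sync-vs-async-metric} and \ref{fig:async-attractors} and justifies the name ``eventual rendezvous.''

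Under that reading, the plan is as follows. First, forward-invariance of $\mathcal{A}_{\sigma,\lambda}$ (built into the notion of attractor) gives $Z(t) \in \mathcal{A}_{\sigma,\lambda}$ for all $t \geq 0$, so in particular $Z(t_1) \in \mathcal{A}_{\sigma,\lambda}$. Second, because the attractor is compact and the flow $\Phi_t$ of \eqref{eq:dZ} is continuous, the Krylov--Bogolyubov theorem produces a $\Phi_t$-invariant Borel probability measure $\mu$ supported on $\mathcal{A}_{\sigma,\lambda}$. Third, the Poincar\'e recurrence theorem then guarantees that $\mu$-almost every point of $\mathcal{A}_{\sigma,\lambda}$ is recurrent under $\Phi_t$, and that the recurrent points are dense in $\mathrm{supp}(\mu)$. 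Fourth, I would select a recurrent point $\tilde{Z}$ close to $Z(t_1)$ and exploit continuous dependence of solutions on initial conditions over a finite time window, together with continuity of $\tilde{d}$: a recurrence time for $\tilde{Z}$ transfers to a time $t_2 > t_1$ along the original orbit satisfying $|\tilde{d}(Y(t_2)) - \delta| < \vep$, which in particular yields the stated inequality.

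The main obstacle is bridging the measure-theoretic conclusion ``$\mu$-a.e.\ point is recurrent'' with the specific orbit through $Z(0)$: recurrence is not preserved under arbitrary perturbation, so the approximation step requires careful control of how long the perturbed orbit must be followed before it returns near itself, and of how tightly it shadows the reference orbit over that window. A secondary difficulty is that Claim~\ref{claim:attractor} is itself unproven and only supported by the numerical evidence of Section~\ref{sec:lam-relax}; a fully rigorous argument therefore depends on topological and measure-theoretic fine structure of $\mathcal{A}_{\sigma,\lambda}$---for instance whether it supports a physical (SRB-type) measure with respect to which Lebesgue-typical initial conditions near the attractor are generic---that our analysis has not established.
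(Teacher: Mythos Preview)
Your opening observation is correct: the inequality $\tilde{d}(Y(t_2)) < \delta + \vep$ as literally stated follows immediately from continuity of $t \mapsto \tilde{d}(Y(t))$ by taking $t_2$ slightly larger than $t_1$, with no appeal to the attractor. You are also right that the intended content is the two-sided recurrence $|\tilde{d}(Y(t_2)) - \delta| < \vep$; this is exactly what the paper's proof actually establishes before specializing to the one-sided inequality.

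Where you diverge from the paper is in the mechanism for recurrence. The paper builds \emph{minimality} into its working definition of attractor (property~(2): no proper forward-invariant subset). From minimality it follows in one step that every forward orbit in $\mathcal{A}_{\sigma,\lambda}$ is dense---the closure of any orbit is closed and forward-invariant, hence equals $\mathcal{A}_{\sigma,\lambda}$---so in particular the orbit through $Z(0)$ returns arbitrarily close to $Z(t_1)$, and continuity of $\tilde{d}$ finishes. This completely sidesteps the obstacle you identify: under minimality \emph{every} point is recurrent, not merely $\mu$-almost every one, so there is nothing to transfer from a generic orbit to the specific orbit through $Z(0)$.

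Your Krylov--Bogolyubov/Poincar\'e route would be the natural one if the hypothesis were only compactness and invariance without minimality, but then the gap you flag is genuine and cannot be closed without extra structure (an SRB-type measure plus a genericity assumption on $Z(0)$, as you note). Given that the paper's Claim~\ref{claim:attractor} is read with minimality included, the invariant-measure machinery is unnecessary and the argument collapses to the short topological one above.
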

	
	\begin{proof}
		An attractor $\mathcal{A}$ of a dynamical system has two necessary properties: (1) $\mathcal{A}$ is forward invariant, i.e. any trajectory starting in $\mathcal{A}$ stays in $\mathcal{A}$. (2) There is no proper subset of $\mathcal{A}$ which has this property. 
		
		Properties (1) and (2) imply given any $\vep>0$, any point $a\in\mathcal{A}$, and any trajectory $z$ such that $z(t_1)\in\mathcal{A}$, there is always a point in time $t_2>t_1$ such that $\|z(t_2)-a\| < \vep$. If no such point in time existed, then we could define the $\vep$-ball centered on $a$ by $B_\vep(a)$, and the set $\mathcal{A}\setminus B_\vep(a)$ would satisfy property (1), which would directly contradict property (2).
		
		Our claim in Theorem \ref{thm:eventual_rendez} then follows from the continuity of the rendezvous metric $\tilde{d}$: Given $\vep > 0$, there exists $\eta$ such that if $\| Z(t_2) - Z(t_1) \| < \eta$, continuity of $\tilde{d}$ implies that $|\tilde{d}(Y(t_2)) - \tilde{d}(Y(t_1))| < \vep$. By the preceding argument, such times $t_1,t_2$ must exist.
	\end{proof}
	
	The idea here is that if the orbit (which lives on this attractor) produces a small value $ \delta $ for the rendezvous metric, then there is a future time when the same orbit will pass close enough to the same point to produce a rendezvous metric that is sufficiently close to $ \delta $.
	
	\section{Discussion}
	In this paper, we have shown how the motivation dynamics architecture and associated results introduced in \cite{PBR-DEK:18} can be extended to the multi-agent case. In particular, we have demonstrated the feasibility of using the unfolding pitchfork decision-making mechanism to coordinate recurrent rendezvous tasks between autonomous agents.  Moreover, we have shown that the parameters of the multi-agent system \eqref{eq:dZ} can be chosen in such a way as to guarantee that the system does not reach a deadlock state, and that there will always eventually be a rendezvous event. In the language of formal methods \cite{HKG-ML-VR:18}, our system satisfies the following specification: always ((eventually rendezvous) and (eventually visit designated location)). The connection between the structure of the dynamical system and the discrete specification that it satisfies is beyond the scope of this paper, but is of great interest for future work.
	
	Several interesting research questions follow from the results of this paper. First, in the problem as it is presented in this paper, each agent has full knowledge of the others' positions. In other words, the communication graph is complete and communication is uncorrupted by noise or delays. The results presented in this paper could naturally be generalized to the case of an incomplete but connected communications graph, or the cases where communication among agents is corrupted by stochastic noise or communication delays. These generalizations should be feasible given standard tools from the distributed consensus literature.
	
	Second, the agents' tasks may be more subtle than the tasks considered here, where each agent has two tasks: exactly one designated point-attractor task and one rendezvous task where the agents must precisely rendezvous at a point. If approximate rendezvous, e.g., to a neighborhood, is acceptable, what further results might be possible? In the case that there are $M$ point-attractor tasks and $N \neq M$ agents, how can the system automatically distribute these tasks among the agents?
	
	Third, there is a fundamental mathematical question of characterizing the attractor whose existence we postulate in Claim \ref{claim:attractor}. It is likely that symmetric bifurcation theory \cite{MG-IS-DGS:88} will provide a set of tools to enable this analysis. 
	Answering these questions will push forward knowledge of how the unfolding pitchfork can be used to coordinate high-level autonomous behaviors.

	\bibliographystyle{abbrv}
	\bibliography{task-coord,rendezvous}
	
	\newpage
	
	\begin{appendices}

		\section{Jacobian in $ \lambda\to\infty $ Limit}\label{apdx:limit-case-jacobian}
		In this appendix, we compute the Jacobian for our system \eqref{eq:dZ} in the singular limit $\lambda \to \infty$. We also derive an expression for its characteristic polynomial.
		
		\begin{lemma}
			Consider the system \eqref{eq:dZ} in the limit $\lambda \to \infty$ given by \eqref{eq:dW}. The Jacobian of \eqref{eq:dW} at the equilibrium point \eqref{eq:W-equilibrium} has the form
			\begin{equation}\label{eq:Df-tilde}
			D\tilde{f} = \begin{pmatrix}
			\tilde{A} & \tilde{B}_{12} & \tilde{B}_{13} & \cdots \\
			\tilde{B}_{21} & \tilde{A} & \tilde{B}_{23} & \cdots \\
			\tilde{B}_{31} & \tilde{B}_{32} & \tilde{A} & \cdots \\
			\vdots & \vdots & \vdots & \ddots
			\end{pmatrix} 
			\end{equation}
			where
			\begin{equation}\label{eq:A-tilde}
			\tilde{A} = \begin{pmatrix}
			\frac{1-y^*-N}{N} & 0 & \frac{1}{2} \\
			0 & \frac{1-y^*-N}{N} & 0 \\
			\frac{8\sigma y^*(1-y^*)(2(y^*)^2-2y^*-1)}{\vphi_1^*+\vphi_2^*} & 0 & 4\sigma y^*(1-y^*) \\
			\end{pmatrix},
			\end{equation}with $ \vphi_1^* = \frac{1}{2}(1-y^*)^2 $, $ \vphi_2^* = \frac{N-1}{2N}(y^*)^2 $, and
			\begin{equation}\label{eq:Bij-tilde}
			\tilde{B}_{ij} = \begin{pmatrix}
			\frac{1-y^*}{N}R^{j-i}  & 0_{2\times1} \\
			-\frac{8\sigma y^*(1-y^*)(y^*-2)}{(N-1)(\vphi_1^*+\vphi_2^*)}(y^*,0) R^{j-i}
			& 0
			\end{pmatrix}, \quad\text{with}\quad R^k = \begin{pmatrix}
			\cos\left(\frac{2\pi k}{N}\right) & -\sin\left(\frac{2\pi k}{N}\right) \\
			\sin\left(\frac{2\pi k}{N}\right) & \cos\left(\frac{2\pi k}{N}\right)
			\end{pmatrix}.
			\end{equation}  
		\end{lemma}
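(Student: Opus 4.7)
The plan is to compute the Jacobian $D\tilde{f}|_{W_*}$ block-by-block by viewing the state $W$ as $N$ three-dimensional blocks $(y_k^T, m_k)^T$, so that the $(i,j)$ block of the Jacobian equals $\partial(\dot y_i, \dot m_i)/\partial(y_j, m_j)$. The first step is to exploit the $C_N$-equivariance from Lemma \ref{lemma:equivariant-group}: since the equilibrium $W_*$ sits in the symmetric subspace, the Jacobian at $W_*$ commutes with the permutation representation of $C_N$, which forces all diagonal blocks to be equal (call this common block $\tilde{A}$) and each off-diagonal block $\tilde{B}_{ij}$ to depend only on $j-i \pmod N$. Thus it suffices to compute $\tilde{A}$ together with the blocks $\tilde{B}_{1j}$ for $j \neq 1$.

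For the diagonal block $\tilde{A}$, I would directly differentiate \eqref{eq:dyk-lim} and \eqref{eq:dmk-lim} with respect to $(y_k, m_k)$ and evaluate at $W_*$. Two simplifications drive the calculation. First, at $W_*$ one has $(1+m_k)/2 = y^*$ and $(1-m_k)/2 = 1-y^*$. Second, by rotational symmetry of the $N$ equally spaced points, $\sum_{j\neq k} R^{j-k}(y^*,0)^T = -(y^*,0)^T$. These collapse $\partial \dot y_k/\partial y_k$ to $((1-y^*-N)/N)\, I_2$ and reduce $\partial \dot y_k / \partial m_k$ to $(1/2, 0)^T$ after the $u - y_k$ and rotated-sum contributions combine. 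For the $m_k$-row I would use the factorization $\dot m_k = \sigma(1-m_k^2)(m_k + 3\alpha_k)$ together with the deadlock identity $m_k + 3\alpha_k = 0$ from Lemma \ref{lemma:deadlock} to reduce $\partial \dot m_k/\partial m_k$ to $\sigma(1-m_k^2) = 4\sigma y^*(1-y^*)$. The more delicate entry $\partial \dot m_k/\partial y_k$ requires the quotient rule applied to $\alpha_k$, evaluating $\nabla_{y_k}\vphi_{k,1}$ and $\nabla_{y_k}\vphi_{k,2}$ at $W_*$; the resulting algebraic combination produces the $(3,1)$ entry of $\tilde{A}$ with its factor $2(y^*)^2 - 2y^* - 1$.

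For the off-diagonal blocks $\tilde{B}_{ij}$, note first that neither $\dot y_k$ nor $\dot m_k$ depends explicitly on $m_j$ when $j \neq k$, so the third column of each $\tilde{B}_{ij}$ is automatically zero. The upper-left $2 \times 2$ block comes from $\partial \dot y_k / \partial y_j = \frac{1-m_k}{2N} R^{j-k}$, which evaluates to $\frac{1-y^*}{N} R^{j-k}$ at $W_*$. For the bottom row, $\vphi_{k,1}$ is independent of $y_j$ when $j \neq k$, so only $\nabla_{y_j}\vphi_{k,2}$ contributes; a direct calculation using the rotated-sum identity shows that this gradient at $W_*$ is proportional to $(y^*, 0)\, R^{j-k}$, and combining it with the quotient-rule factor from $\alpha_k$ and with $\sigma(1-m_k^2)$ yields the stated coefficient.

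The main bookkeeping obstacle is the $\partial \dot m_k / \partial y_j$ computations, because the nonlinearity of $\alpha_k = (\vphi_{k,1} - \vphi_{k,2})/(\vphi_{k,1} + \vphi_{k,2})$ forces the quotient rule and leaves $(\vphi_1^* + \vphi_2^*)$ in the denominators of both $\tilde{A}$ and $\tilde{B}_{ij}$. Keeping these denominators unsimplified, as the lemma statement does, keeps the algebra tractable and exposes the shared structural role of the total task value $\vphi_1^* + \vphi_2^*$ across both kinds of blocks.
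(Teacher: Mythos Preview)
Your proposal is correct and is precisely the ``direct computation'' the paper invokes as its entire proof; you have simply supplied the details the paper omits, including the helpful use of $C_N$-equivariance at $W_*$ to reduce the work to one diagonal and one off-diagonal block. The key evaluations you identify --- $(1\pm m_k)/2 = y^*,\,1-y^*$, the rotated-sum identity $\sum_{j\neq k}R^{j-k}(y^*,0)^T = -(y^*,0)^T$, and the deadlock condition $m_k+3\alpha_k=0$ killing the product-rule cross term --- are exactly what make the entries collapse to the stated forms.
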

		
		\begin{proof}
			Direct computation.
		\end{proof}
		
		\subsection{Computation of Characteristic Polynomial of $ D\tilde{f} $}\label{sec:char-poly-limit}
		
		First we state a lemma which allows us to compactly represent the Jacobian of \eqref{eq:dW} at the equilibrium point.
		
		\begin{lemma}\label{lemma:compact-jacobian-limit}
			The Jacobian \eqref{eq:Df-tilde} of \eqref{eq:dW} at the equilibrium given by \eqref{eq:W-equilibrium} can be compactly represented by
			\begin{equation}\label{eq:compact-jacobian-limit}
			D\tilde{f} = I_N\otimes(\tilde{A}-\tilde{B}) + \tilde{V}^T\tilde{U},
			\end{equation}
			where $ \tilde{A} $ is given in \eqref{eq:A-tilde}, $~\otimes$ indicates the Kronecker product, and  $ \tilde{B} \coloneqq \tilde{B}_{ii} $, where $ \tilde{B}_{ij} $ is given in \eqref{eq:Bij-tilde} (note that $ \tilde{B}_{ii} $ has no dependence on $ i $). The block matrices $\tilde{V}$ and $\tilde{U}$ are  given by $ \tilde{V} \coloneqq (\tilde{C}_1^T~\tilde{C}_2^T~\cdots~\tilde{C}_N^T) $ and $ \tilde{U} \coloneqq (\tilde{D}_1~\tilde{D}_2~\cdots~\tilde{D}_N) $, where the components $\tilde{C_i}$ and $\tilde{D_i}$ are given by
			\[ \tilde{C}_{i} \coloneqq \begin{pmatrix}
			\frac{1-y^*}{N}R^{-i} & 0_{2\times1} \\
			-\frac{8\sigma y^*(1-y^*)(y^*-2)}{(N-1)(\vphi_1^*+\vphi_2^*)}(y^*,0) R^{-i} & 0
			\end{pmatrix}, \qquad 
			\tilde{D}_j \coloneqq \begin{pmatrix}
			R^{j} & 0_{2\times1}\\
			0_{1\times2} & 0
			\end{pmatrix}. \]
		\end{lemma}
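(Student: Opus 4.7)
The plan is to verify the identity \eqref{eq:compact-jacobian-limit} by direct block multiplication, showing that the proposed sum on the right reproduces the block pattern of the Jacobian in \eqref{eq:Df-tilde}. First I would note that the Kronecker term $I_N \otimes (\tilde{A} - \tilde{B})$ is block-diagonal with every diagonal $3\times 3$ block equal to $\tilde{A} - \tilde{B}$ and every off-diagonal block zero. Thus, verifying the lemma reduces to checking that the correction term $\tilde{V}^T \tilde{U}$ has $(i,j)$ block equal to $\tilde{B}_{ij}$ for $i \neq j$ and equal to $\tilde{B}$ when $i = j$; adding the two pieces then recovers $\tilde{A}$ on every diagonal block and $\tilde{B}_{ij}$ off-diagonal, matching \eqref{eq:Df-tilde}.

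The key structural observation is that because $\tilde{V}^T$ is a vertical stack of the $3\times 3$ blocks $\tilde{C}_i$ and $\tilde{U}$ is a horizontal concatenation of the $3\times 3$ blocks $\tilde{D}_j$, the $(i,j)$ block of $\tilde{V}^T \tilde{U}$ is simply the small product $\tilde{C}_i \tilde{D}_j$. Carrying out this product from the explicit definitions is routine: the only nontrivial column of $\tilde{D}_j$ is its first, carrying $R^j$, so the rotation factors combine through the group-homomorphism property $R^{-i} R^j = R^{j-i}$, and the resulting $3\times 3$ matrix reproduces the template for $\tilde{B}_{ij}$ in \eqref{eq:Bij-tilde} block-for-block. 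When $i = j$, the factor $R^{j-i}$ collapses to $I_2$, returning $\tilde{B}_{ii} = \tilde{B}$ exactly as needed.

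I do not anticipate any real obstacle, because the lemma is effectively a factorization identity packaged for later use (presumably to enable a matrix-determinant-lemma style manipulation when extracting the characteristic polynomial \eqref{eq:char-poly-lim}). The only subtlety worth flagging is the ordering of factors: the transposes baked into the definition of $\tilde{V}$ are chosen precisely so that $\tilde{V}^T \tilde{U}$ produces $\tilde{C}_i \tilde{D}_j$ rather than $\tilde{D}_j \tilde{C}_i$, and it is this ordering that makes the exponent cancellation $R^{-i} \cdot R^j = R^{j-i}$ deliver the required $R^{j-i}$ in each off-diagonal block. Once the bookkeeping is handled, the verification is simply writing out the two sides.
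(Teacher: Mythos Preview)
Your proposal is correct and matches the paper's own proof essentially line for line: the paper simply observes that $\tilde{C}_i\tilde{D}_j = \tilde{B}_{ij}$, so $\tilde{V}^T\tilde{U}$ is the full block matrix of $\tilde{B}_{ij}$'s, and adding the block-diagonal Kronecker term $I_N\otimes(\tilde{A}-\tilde{B})$ restores the diagonal blocks to $\tilde{A}$. One tiny wording slip: $\tilde{D}_j$ has its first \emph{two} columns nontrivial (they carry the $2\times 2$ block $R^j$), not just its first, but your subsequent use of $R^{-i}R^{j}=R^{j-i}$ is exactly the right computation and the argument goes through unchanged.
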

		
		\begin{proof}
			Observe that $ \tilde{C}_i\tilde{D}_j = \tilde{B}_{ij} $. Then have that
			\[ \begin{pmatrix}
			\tilde{B}_{11} & \tilde{B}_{12} & \tilde{B}_{13} & \cdots \\
			\tilde{B}_{21} & \tilde{B}_{22} & \tilde{B}_{23} & \cdots \\
			\tilde{B}_{31} & \tilde{B}_{32} & \tilde{B}_{33} & \cdots \\
			\vdots & \vdots & \vdots & \ddots
			\end{pmatrix} = \tilde{V}^T\tilde{U}. \]
			Thus it follows that
			\[ \tilde{D}\tilde{f} = \begin{pmatrix}
			\tilde{A} & \tilde{B}_{12} & \tilde{B}_{13} & \cdots \\
			\tilde{B}_{21} & \tilde{A} & \tilde{B}_{23} & \cdots \\
			\tilde{B}_{31} & \tilde{B}_{32} & \tilde{A} & \cdots \\
			\vdots & \vdots & \vdots & \ddots
			\end{pmatrix} = I_N\otimes(\tilde{A}-\tilde{B}) + \tilde{V}^T\tilde{U}. \]
		\end{proof}
		
		We can compute the characteristic polynomial by using the representation given in \eqref{eq:compact-jacobian-limit} and the Matrix Determinant Lemma, which we will state here without proof.
		
		\begin{lemma}\label{lemma:mat-det-lemma}
			Let $ A $ be an $ n\times n $ invertible matrix, and let $ U $ and $ V $ be $ m\times n $ matrices. Then
			\begin{equation}\label{eq:mat-det-lemma}
			\det(A + V^TU) = \det(I_m + UA^{-1}V^T)\det(A).
			\end{equation}
		\end{lemma}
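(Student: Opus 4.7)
The plan is to prove the Matrix Determinant Lemma by the standard block matrix trick: form an $(n+m)\times(n+m)$ matrix whose determinant can be evaluated in two ways, once yielding the left-hand side of \eqref{eq:mat-det-lemma} and once yielding the right-hand side. Specifically, I would introduce
\[ P \coloneqq \begin{pmatrix} A & -V^T \\ U & I_m \end{pmatrix}, \]
and then exhibit two block-triangular factorizations of $P$.

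First, using the invertibility of $A$, write
\[ P = \begin{pmatrix} I_n & 0 \\ UA^{-1} & I_m \end{pmatrix} \begin{pmatrix} A & -V^T \\ 0 & I_m + UA^{-1}V^T \end{pmatrix}. \]
Since both factors are block triangular with identity blocks on the diagonal of the first and with $A$, $I_m + UA^{-1}V^T$ on the diagonal of the second, taking determinants yields $\det(P) = \det(A)\det(I_m + UA^{-1}V^T)$. Second, write
\[ P = \begin{pmatrix} I_n & -V^T \\ 0 & I_m \end{pmatrix} \begin{pmatrix} A + V^TU & 0 \\ U & I_m \end{pmatrix}, \]
which is verified by direct multiplication and gives $\det(P) = \det(A + V^TU)$. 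Equating the two expressions for $\det(P)$ yields the desired identity.

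The only subtlety is verifying algebraically that the two factorizations of $P$ are correct, which is a routine block multiplication; the invertibility hypothesis on $A$ is used in the first factorization to justify the factor $UA^{-1}$. There is no genuine obstacle, and no need for the Schur complement formula to be invoked as a black box — the block triangular factorizations make the determinant computation transparent. Alternatively, one could simply invoke the Schur complement determinant formula
\[ \det\begin{pmatrix} P_{11} & P_{12} \\ P_{21} & P_{22} \end{pmatrix} = \det(P_{11})\det(P_{22} - P_{21}P_{11}^{-1}P_{12}) = \det(P_{22})\det(P_{11} - P_{12}P_{22}^{-1}P_{21}) \]
applied to $P$ with the identifications $P_{11} = A$, $P_{12} = -V^T$, $P_{21} = U$, $P_{22} = I_m$, but the explicit factorization is more self-contained.
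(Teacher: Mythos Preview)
Your proof is correct and is the standard block-matrix argument for the Matrix Determinant Lemma; both factorizations of $P$ check out by direct multiplication, and the determinant computations follow from block-triangularity. The paper itself states this lemma explicitly ``without proof,'' treating it as a well-known identity, so there is no proof in the paper to compare against---your argument simply fills in what the authors chose to omit.
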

		
		With this we may reduce the determinant of $ D\tilde{f}-\mu I_{3N} $ to the product of determinants of $ 3\times3 $ matrices.
		
		\begin{lemma}\label{lemma:det-reduction-limit}
			Let $ N\in\mathbb{N} $ and consider $ N \geq 3 $. Let $ \tilde{A},\tilde{B},\tilde{C}_k,\tilde{D}_k,\tilde{V},\tilde{U} $ be defined the same as in Lemma \ref{lemma:compact-jacobian-limit}. Then we may write the characteristic polynomial of $ D\tilde{f} $ as
			\begin{equation}\label{eq:det-Df-simple-limit}
			\det(D\tilde{f}-\mu I_{3N}) = \det\left(\tilde{M} + \sum_{k=1}^{N}\tilde{D}_k adj(\tilde{A}-\tilde{B}-\mu I_3)\tilde{C}_k\right)\det\left(\tilde{A}-\tilde{B}-\mu I_3\right)^{N-2},
			\end{equation}
			where $ \tilde{M} $ is given by
			\[ \tilde{M} \coloneqq \begin{pmatrix}
			\det(\tilde{A}-\tilde{B}-\mu I_3) & 0 & 0 \\
			0 & \det(\tilde{A}-\tilde{B}-\mu I_3) & 0 \\
			0 & 0 & 1 & \\
			\end{pmatrix}. \]
		\end{lemma}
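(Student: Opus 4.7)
The plan is to derive the characteristic polynomial by applying the Matrix Determinant Lemma (Lemma \ref{lemma:mat-det-lemma}) to the compact form of $D\tilde{f}$ given in Lemma \ref{lemma:compact-jacobian-limit}, and then exploit the special block structure of the $\tilde{D}_k$ matrices to account for the exponent $N-2$ (rather than the naive $N-3$) appearing on $\det(\tilde{A}-\tilde{B}-\mu I_3)$.

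First I would set $K := \tilde{A} - \tilde{B} - \mu I_3$ and write
\[ D\tilde{f} - \mu I_{3N} = I_N \otimes K + \tilde{V}^T \tilde{U}. \]
Working at values of $\mu$ for which $K$ is invertible (a polynomial identity that we then extend to all $\mu$ by continuity), apply Lemma \ref{lemma:mat-det-lemma} with $A = I_N \otimes K$, noting $(I_N \otimes K)^{-1} = I_N \otimes K^{-1}$ and $\det(I_N \otimes K) = \det(K)^N$. This gives
\[ \det(D\tilde{f}-\mu I_{3N}) = \det\!\Bigl(I_3 + \tilde{U}\,(I_N \otimes K^{-1})\,\tilde{V}^T\Bigr)\det(K)^N. \]
Expanding the block product shows $\tilde{U}(I_N \otimes K^{-1})\tilde{V}^T = \sum_{k=1}^{N} \tilde{D}_k K^{-1} \tilde{C}_k$, and substituting $K^{-1} = \operatorname{adj}(K)/\det(K)$ yields
\[ \det(D\tilde{f}-\mu I_{3N}) = \det\!\Bigl(I_3 + \tfrac{1}{\det(K)}\,S\Bigr)\det(K)^N, \qquad S := \sum_{k=1}^{N}\tilde{D}_k\operatorname{adj}(K)\tilde{C}_k. \]

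The crucial structural observation, which is where the $N-2$ exponent comes from, is that each $\tilde{D}_k = \operatorname{diag}(R^k,0)$ has an identically zero third row. Hence $S$ has zero third row as well, so the matrix $I_3 + S/\det(K)$ has last row $(0,0,1)$. Let $S_{2\times 2}$ denote the top-left $2 \times 2$ block of $S$; expansion along the last row gives
\[ \det\!\Bigl(I_3 + \tfrac{1}{\det(K)}S\Bigr) = \det\!\Bigl(I_2 + \tfrac{1}{\det(K)}S_{2\times 2}\Bigr) = \frac{1}{\det(K)^2}\det\!\bigl(\det(K)\,I_2 + S_{2\times 2}\bigr). \]
But since the third row of $S$ is zero, this is exactly $\det(K)^{-2}\det(\tilde{M}+S)$, where $\tilde{M}$ has $\det(K)$ in its first two diagonal entries and $1$ in the bottom-right (the $1$ there is harmless because the last row of $\tilde M + S$ is still $(0,0,1)$, so the $3\times 3$ determinant reduces to the $2\times 2$ determinant of the top-left block).

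Combining,
\[ \det(D\tilde{f}-\mu I_{3N}) = \det(\tilde{M}+S)\,\det(K)^{N-2}, \]
which is precisely the claimed identity. The main obstacle is the bookkeeping in the second-to-last step: recognizing that the rank-deficient structure of the $\tilde{D}_k$'s forces a zero row in $S$ and thereby permits the replacement of $\det(K)\,I_3$ by $\tilde{M}$, reducing the $\det(K)^{-3}$ one would naively obtain to the correct $\det(K)^{-2}$. The identity then extends from the open set where $\det(K) \neq 0$ to all $\mu$ by the fact that both sides are polynomials in $\mu$.
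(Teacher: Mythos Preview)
Your proof is correct and follows essentially the same route as the paper: apply the Matrix Determinant Lemma to the compact form $I_N\otimes K + \tilde V^T\tilde U$, reduce $\tilde U(I_N\otimes K^{-1})\tilde V^T$ to the sum $\sum_k \tilde D_k K^{-1}\tilde C_k$, replace $K^{-1}$ by $\operatorname{adj}(K)/\det(K)$, and then exploit the vanishing third row of each $\tilde D_k$ to absorb only two (not three) factors of $\det(K)$ into the $3\times3$ determinant. The paper justifies this last step via multilinearity of the determinant in rows, whereas you expand along the last row and re-embed; these are the same observation, and your added remark that the identity extends to all $\mu$ by polynomial continuity is a point the paper leaves implicit.
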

		
		\begin{proof}
			We have that
			\begin{equation*}
			\begin{aligned}
			\det(D\tilde{f}-\mu I_{3N}) &= \det\left(I_N\otimes(\tilde{A}-\tilde{B}-\mu I_3) + \tilde{V}^T\tilde{U}\right) \\
			&= \det\left(I_3 + \tilde{U}(I_N\otimes(\tilde{A}-\tilde{B}-\mu I_3))^{-1}\tilde{V}\right)\det\left(I_N\otimes(\tilde{A}-\tilde{B}-\mu I_3)\right) \\
			&= \det\left(I_3 + \tilde{U}(I_N\otimes(\tilde{A}-\tilde{B}-\mu I_3)^{-1})\tilde{V}\right)\det\left(\tilde{A}-\tilde{B}-\mu I_3\right)^N \\
			&= \det\left(I_3 + \sum_{k=1}^{N}\tilde{D}_k(\tilde{A}-\tilde{B}-\mu I_3)^{-1}\tilde{C}_k\right)\det\left(\tilde{A}-\tilde{B}-\mu I_3\right)^N \\
			&= \det\left(I_3 + \tfrac{1}{\det(\tilde{A}-\tilde{B}-\mu I_3)}\sum_{k=1}^{N}\tilde{D}_k adj(\tilde{A}-\tilde{B}-\mu I_3)\tilde{C}_k\right)\det\left(\tilde{A}-\tilde{B}-\mu I_3\right)^N \\
			&= \det\left(\tilde{M} + \sum_{k=1}^{N}\tilde{D}_k adj(\tilde{A}-\tilde{B}-\mu I_3)\tilde{C}_k\right)\det\left(\tilde{A}-\tilde{B}-\mu I_3\right)^{N-2}.
			\end{aligned}
			\end{equation*}
			The second equality in the derivation comes from the Matrix Determinant Lemma, and the final equality comes from the fact that the determinant of an $ n\times n $ matrix is $ n $-linear in the rows of the matrix and that $ \tilde{D}_kadj(\tilde{A}-\tilde{B}-\mu I_3)\tilde{C}_k $ is all 0 below the second row.
		\end{proof}
		
		Before computing the characteristic polynomial we need to evaluate the expression $ \sum_{k=1}^{N}\tilde{D}_k adj(\tilde{A}-\tilde{B}-\mu I_3)\tilde{C}_k $. To do this we need the following lemma.
		
		\begin{lemma}\label{lemma:rot-mat-automorphism-lemma}
			Let $ N\in\mathbb{N} $ and consider $ N \geq 3 $. Define $ R $ to be the same as in \eqref{eq:2d-rot-mat}. Then we have
			\begin{equation}\label{eq:rot-mat-automorphism-lemma}
			\sum_{k=1}^{N}R^k\begin{pmatrix}a & 0 \\ 0 & b\end{pmatrix}R^{-k} = \frac{N}{2}\begin{pmatrix}a+b & 0 \\ 0 & a+b\end{pmatrix}.
			\end{equation}
		\end{lemma}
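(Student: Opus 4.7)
The plan is to exploit the fact that the left-hand side is a group average over the cyclic rotation group $\langle R \rangle$, which forces the result to lie in the commutant of that group's action on $\mathbb{R}^2$. For $N \geq 3$ the group $\langle R \rangle$ acts irreducibly on $\mathbb{R}^2$ (the only invariant subspaces are $\{0\}$ and $\mathbb{R}^2$), so by a Schur-type argument its real commutant consists only of scalar multiples of $I_2$. Therefore the sum equals $c\,I_2$ for some scalar $c$, and taking traces yields $2c = \sum_{k=1}^{N}\operatorname{tr}(R^k M R^{-k}) = N\,\operatorname{tr}(M) = N(a+b)$, whence $c = N(a+b)/2$.

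For a self-contained proof that does not invoke representation theory, I would carry out the multiplication directly. With $c_k \coloneqq \cos(2\pi k/N)$ and $s_k \coloneqq \sin(2\pi k/N)$, a short computation gives
\[ R^k \begin{pmatrix} a & 0 \\ 0 & b \end{pmatrix} R^{-k} = \begin{pmatrix} a c_k^2 + b s_k^2 & (a-b) c_k s_k \\ (a-b) c_k s_k & a s_k^2 + b c_k^2 \end{pmatrix}. \]
Summing over $k$, the lemma reduces to the three trigonometric identities $\sum_{k=1}^{N} c_k^2 = \sum_{k=1}^{N} s_k^2 = N/2$ and $\sum_{k=1}^{N} c_k s_k = 0$.

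The main (and only) obstacle is verifying these sums and pinpointing where $N \geq 3$ is used. The cleanest route is to pass to complex exponentials: $c_k^2 - s_k^2 = \mathrm{Re}(e^{4\pi i k/N})$ and $2 c_k s_k = \mathrm{Im}(e^{4\pi i k/N})$, so both the off-diagonal vanishing and the diagonal equality follow from $\sum_{k=1}^{N} e^{4\pi i k/N} = 0$. That geometric-series identity holds precisely when $e^{4\pi i/N} \neq 1$, i.e., when $N \geq 3$; together with $c_k^2 + s_k^2 = 1$ this forces $\sum c_k^2 = \sum s_k^2 = N/2$. For $N = 2$ the sum of the exponentials equals $N$ rather than $0$ and the conclusion fails (consistent with $R = -I_2$, which commutes with every matrix), showing that the hypothesis $N \geq 3$ is sharp.
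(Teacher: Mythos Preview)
Your direct-computation argument is correct and essentially coincides with the paper's proof: the paper also expands $R^k\operatorname{diag}(a,b)R^{-k}$ in terms of $\cos\theta_k,\sin\theta_k$, passes to double angles, and uses $\sum_{k=1}^{N} e^{i\,2\pi m k/N}=0$ for $m=2$ (valid since $N\geq 3$) to evaluate the sums. The only cosmetic difference is that the paper treats the two rank-one diagonals $\operatorname{diag}(1,0)$ and $\operatorname{diag}(0,1)$ separately and then takes the linear combination, whereas you handle $\operatorname{diag}(a,b)$ in one shot.

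Your Schur-type argument, however, has a genuine gap. For $N\geq 3$ the cyclic group $\langle R\rangle$ does act irreducibly on $\R^2$, but the real commutant of this irreducible real representation is \emph{not} the scalars: it is the two-dimensional algebra $\{pI_2+qJ : p,q\in\R\}\cong\mathbb{C}$, where $J=\begin{pmatrix}0&-1\\1&0\end{pmatrix}$, since every rotation commutes with $R$. So from ``the average lies in the commutant'' you can only conclude $\sum_k R^k M R^{-k}=pI_2+qJ$, not $pI_2$. The fix is easy: because $M=\operatorname{diag}(a,b)$ is symmetric and each $R^k$ is orthogonal, every summand $R^k M R^{-k}$ is symmetric, hence the sum is symmetric; but $pI_2+qJ$ is symmetric only when $q=0$. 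With that extra line the trace computation finishes the job. As stated, though, the first paragraph does not stand on its own.
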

		\begin{proof}
			Denote $ \theta_k = \frac{2\pi k}{N} $. We have that
			\[ \begin{aligned}
			R^k\begin{pmatrix}1 & 0 \\ 0 & 0\end{pmatrix}R^{-k} = \begin{pmatrix} \cos^2\theta_k & \sin\theta_k\cos\theta_k \\ \sin\theta_k\cos\theta_k & \sin^2\theta_k\end{pmatrix} = \frac{1}{2}\begin{pmatrix} 1+\cos(2\theta_k) & \sin(2\theta_k) \\ \sin(2\theta_k) & 1-\cos(2\theta_k)\end{pmatrix}.
			\end{aligned} \]
			We also have that
			\[ \sum_{k=1}^{k=N}\cos(2\theta_k) = \sum_{k=1}^{k=N}\sin(2\theta_k) = 0. \]
			This comes from the fact that the sum of roots of unity is 0. That is
			\[ \sum_{k=1}^{k=N}\exp\left(\frac{i2\pi mk}{N}\right) = 0 \qquad \text{for } m = 1,\dots,N-1. \]
			Thus 
			\[ \sum_{k=1}^{N}R^k\begin{pmatrix}1 & 0 \\ 0 & 0\end{pmatrix}R^{-k} = \frac{N}{2}\begin{pmatrix}1 & 0 \\ 0 & 1\end{pmatrix}. \]
			Similarly one can show that
			\[ \sum_{k=1}^{N}R^k\begin{pmatrix}0 & 0 \\ 0 & 1\end{pmatrix}R^{-k} = \frac{N}{2}\begin{pmatrix}1 & 0 \\ 0 & 1\end{pmatrix}. \]
			Thus we have that
			\[ \sum_{k=1}^{N}R^k\begin{pmatrix}a & 0 \\ 0 & b\end{pmatrix}R^{-k} = a\sum_{k=1}^{N}R^k\begin{pmatrix}1 & 0 \\ 0 & 0\end{pmatrix}R^{-k} + b\sum_{k=1}^{N}R^k\begin{pmatrix}0 & 0 \\ 0 & 1\end{pmatrix}R^{-k} = \frac{N}{2}\begin{pmatrix}a+b & 0 \\ 0 & a+b\end{pmatrix}. \]
		\end{proof}
		
		Now we are in a place to compute the characteristic polynomial.
		
		\begin{lemma}\label{lemma:char-poly-limit}
			The characteristic polynomial of $ D\tilde{f} $ is given by
			\begin{equation}\label{eq:char-poly-limit}
			(G_1(\mu;\sigma)+G_2(\mu;\sigma))^2(G_1(\mu;\sigma))^{N-2},
			\end{equation}
			where 
			\[ G_1(\mu;\sigma) = (\mu+1)\left[ (\mu+1)(4\sigma y^*(1-y^*)-\mu) - \frac{2\sigma (1-y^*)^2(1+y^*)}{\vphi_1^* + \vphi_2^*}\right] \]
			and 
			\[ G_2(\mu;\sigma) = \frac{1-y^*}{2}(g_1(\mu;\sigma)+g_3(\mu;\sigma)) - \frac{4N\sigma (1-y^*)(y^*-2)(y^*)^2}{(N-1)(\vphi_1^* + \vphi_2^*)}g_2(\mu;\sigma) \]
			with
			\[ \begin{aligned}
			g_1(\mu;\sigma) &= -(1+\mu)\left[ 4\sigma y^*(1-y^*) - \mu \right] \\
			g_2(\mu;\sigma) &= \frac{1+\mu}{2} \\
			g_3(\mu;\sigma) &= -(1+\mu)\left[ 4\sigma y^*(1-y^*) - \mu \right] + \frac{4\sigma (1-y^*)^2(1+y^*)}{\vphi_1^* + \vphi_2^*} 
			\end{aligned}  \]
		\end{lemma}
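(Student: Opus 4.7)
The plan is to build on Lemma \ref{lemma:det-reduction-limit}, which already reduces the $3N \times 3N$ characteristic determinant to the product of a single $3\times 3$ determinant with $\det(\tilde{A}-\tilde{B}-\mu I_3)^{N-2}$. The first step is to identify $G_1(\mu;\sigma)$ as $\det(\tilde{A}-\tilde{B}-\mu I_3)$. Looking at the explicit form of $\tilde{A}$ in \eqref{eq:A-tilde} and $\tilde{B} = \tilde{B}_{ii}$ from \eqref{eq:Bij-tilde} with $R^0 = I_2$, one sees that the second row and second column of $\tilde{A}-\tilde{B}-\mu I_3$ contain only the diagonal entry $-(1+\mu)$. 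Expanding along either immediately factors $(\mu+1)$ out, and the remaining $2\times 2$ determinant (involving the $(1,1)$, $(1,3)$, $(3,1)$, $(3,3)$ entries of $\tilde{A}-\tilde{B}-\mu I_3$) reproduces the bracketed cubic in the stated form of $G_1$; this uses the defining cubic \eqref{eq:ystar-cubic} for $y^*$ to rewrite the $(3,1)$ entry in the compact form involving $(1-y^*)^2(1+y^*)/(\vphi_1^*+\vphi_2^*)$. This yields the $G_1^{N-2}$ factor of the characteristic polynomial.

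The main task is then to evaluate the $3\times 3$ determinant $\det\bigl(\tilde{M} + \sum_{k=1}^{N} \tilde{D}_k\,\mathrm{adj}(\tilde{A}-\tilde{B}-\mu I_3)\,\tilde{C}_k\bigr)$. First I would compute $\mathrm{adj}(\tilde{A}-\tilde{B}-\mu I_3)$ by direct cofactor expansion; the block structure carries over, so its upper-left $2\times 2$ block is diagonal with entries $g_1(\mu;\sigma)$ and $g_3(\mu;\sigma)$, its $(1,3)$ entry is $g_2(\mu;\sigma)$ (up to sign/scale), and most other entries vanish. Next, because $\tilde{D}_k$ has zero last row and $\tilde{C}_k$ has zero last column, each summand $\tilde{D}_k\,\mathrm{adj}(\cdot)\,\tilde{C}_k$ has nonzero entries confined to its upper-left $2\times 2$ block. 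This block is the sum of two terms of the form $R^k D R^{-k}$ for diagonal $2\times 2$ matrices $D$: one contribution $\alpha\,R^k X_{11} R^{-k}$ coming from the diagonal block of the adjugate (with $X_{11} = \mathrm{diag}(g_1,g_3)$), and another $\beta\,R^k \bigl[X_{12}(y^*,0)\bigr]R^{-k}$ coming from the $(1,3)$ entry of the adjugate times the bottom-left row of $\tilde{C}_k$. Crucially, both matrices being conjugated are diagonal, so Lemma \ref{lemma:rot-mat-automorphism-lemma} applies directly to each sum.

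Applying the lemma to both contributions makes each $\sum_k R^k D R^{-k}$ a scalar multiple of $I_2$, so the upper-left block of $\tilde{M}+\sum_k\tilde{D}_k\,\mathrm{adj}(\cdot)\,\tilde{C}_k$ becomes $\bigl(G_1+G_2\bigr)I_2$ for exactly the scalar $G_2$ stated in the lemma, while its $(3,3)$ entry is $1$ and the $(1,3)$, $(3,1)$ blocks are zero. The $3\times 3$ matrix is therefore block diagonal with a $(G_1+G_2)I_2$ block and a scalar block, so its determinant is $(G_1+G_2)^2$, delivering the squared factor (which is a direct manifestation of the two-dimensional rotational invariance of the symmetric subspace). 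Combining with the $G_1^{N-2}$ factor from Lemma \ref{lemma:det-reduction-limit} completes the proof.

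The main obstacle is bookkeeping rather than conceptual: one must track the constants $\alpha = (1-y^*)/N$, $\beta = -8\sigma y^*(1-y^*)(y^*-2)/\bigl((N-1)(\vphi_1^*+\vphi_2^*)\bigr)$, and the entries of the adjugate, and then verify that the scalar produced by the two rotational sums matches the closed-form $G_2 = \tfrac{1-y^*}{2}(g_1+g_3) - \tfrac{4N\sigma(1-y^*)(y^*-2)(y^*)^2}{(N-1)(\vphi_1^*+\vphi_2^*)}g_2$ in the statement. This amounts to elementary but fiddly algebra, with signs and factors of $N$ to keep track of; the cubic \eqref{eq:ystar-cubic} is used only in the first step to bring $G_1$ into the displayed form.
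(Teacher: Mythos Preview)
Your proposal is correct and follows essentially the same route as the paper: start from Lemma~\ref{lemma:det-reduction-limit}, identify $G_1=\det(\tilde{A}-\tilde{B}-\mu I_3)$, compute the adjugate, observe that each $\tilde{D}_k\,\mathrm{adj}(\cdot)\,\tilde{C}_k$ is supported on its upper-left $2\times 2$ block with the form $R^k D R^{-k}$ for diagonal $D$, apply Lemma~\ref{lemma:rot-mat-automorphism-lemma} to collapse the sum to $G_2 I_2$, and read off $(G_1+G_2)^2$ from the resulting block-diagonal $3\times 3$ matrix. The only minor deviation is your invocation of the cubic \eqref{eq:ystar-cubic} to simplify the $(3,1)$ entry of $\tilde{A}-\tilde{B}$; the paper does not cite the cubic here and simply asserts $G_1=\det(\tilde{A}-\tilde{B}-\mu I_3)$ as a direct computation, so you may want to recheck whether that simplification is actually needed or whether the entries already combine to the stated $G_1$ without it.
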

		
		\begin{proof}
			The adjugate of $ \tilde{A}-\tilde{B}-\mu I_3 $ has the form
			\[ adj(\tilde{A}-\tilde{B}-\mu I_3) = \begin{pmatrix}
			g_1(\mu;\sigma) & 0 & g_2(\mu;\sigma) \\
			0 & g_3(\mu;\sigma) & 0 \\
			g_4(\mu;\sigma) & 0 & g_5(\mu;\sigma) 
			\end{pmatrix} \]
			where $ g_1 $, $ g_2 $, and $ g_3 $ are given as in the theorem above. The expressions for $ g_4 $ and $ g_5 $ are not necessary, as one can compute
			\[ \tilde{D}_k adj(\tilde{A}-\tilde{B}-\mu I_3)\tilde{C}_k = \begin{pmatrix}
			\tilde{Q}_k & \begin{array}{c} 0 \\ 0 \end{array} \\
			\begin{array}{cc} 0 & 0 \end{array} & 0
			\end{pmatrix} \]
			where
			\[ \tilde{Q}_k = R^k\left[ \frac{1-y^*}{N} \begin{pmatrix} g_1(\mu;\sigma) & 0 \\ 0 & g_3(\mu;\sigma) \end{pmatrix} - \frac{4N\sigma (1-y^*)(y^*-2)y^*}{(N-1)(\vphi_1^* + \vphi_2^*)} \begin{pmatrix} y^*g_2(\mu;\sigma) & 0 \\ 0 & 0 \end{pmatrix} \right]R^{-k} \]
			Applying Lemma \ref{lemma:rot-mat-automorphism-lemma} gives us $ \sum_{k=1}^N \tilde{Q}_k = I_2 G_2(\mu;\sigma) $.
			
			Now note that $ G_1(\mu;\sigma) $ is precisely $ \det(\tilde{A}-\tilde{B}-\mu I_3) $. If we combine these derivations with the form of $ \det(D\tilde{f}-\mu I_{3N}) $ given in \eqref{eq:det-Df-simple-limit} we arrive at the desired expression:
			\[ \det(D\tilde{f}-\mu I_{3N}) = (G_1(\mu;\sigma)+G_2(\mu;\sigma))^2(G_1(\mu;\sigma))^{N-2}. \]
		\end{proof}

		\section{Jacobian for finite $ \lambda $}\label{apdx:general-case-jacobian}
		In this appendix we consider the full system \eqref{eq:dZ}. Using similar procedure as in Appendix \ref{apdx:limit-case-jacobian} we a simplified expression for the characteristic polynomial and trace of the Jacobian of \eqref{eq:dZ} at the equilibriup point \eqref{eq:Z-equilibrium}.
		
		\begin{lemma}
			Consider the system equilibrium given by \eqref{eq:Z-equilibrium}. The Jacobian of \eqref{eq:dZ} at that equilibrium has the form
			\begin{equation}\label{eq:Df}
			Df = \begin{pmatrix}
			A & B_{12} & B_{13} & \cdots \\
			B_{21} & A & B_{23} & \cdots \\
			B_{31} & B_{32} & A & \cdots \\
			\vdots & \vdots & \vdots & \ddots
			\end{pmatrix} 
			\end{equation}
			where there are $ 5N $ rows and columns, and $ A $ and $ B_{ij} $ are $ 5\times5 $ matrices. Specifically we have
			\begin{equation}\label{eq:A}
			A = \begin{pmatrix}
			\frac{1-y^*-N}{N} & 0 & \frac{1}{2} & 0 & 0 \\
			0 & \frac{1-y^*-N}{N} & 0 & 0 & 0 \\
			0 & 0 & 4\sigma y^*(1-y^*) & \frac{24\sigma y^*(1-y^*)\vphi_2^*}{(\vphi_1^*+\vphi_2^*)^2} & -\frac{24\sigma y^*(1-y^*)\vphi_1^*}{(\vphi_1^*+\vphi_2^*)^2} \\
			\lambda(y^*-1) & 0 & 0 & -\lambda & 0 \\
			\lambda y^* & 0 & 0 & 0 & -\lambda
			\end{pmatrix},
			\end{equation}
			with $ \vphi_1^* = \frac{1}{2}(1-y^*)^2 $, $ \vphi_2^* = \frac{N-1}{2N}(y^*)^2 $, and
			\begin{equation}\label{eq:Bij}
			B_{ij} = \begin{pmatrix}
			\frac{1-y^*}{N}R^{j-i} & 0_{2\times3} \\
			0_{2\times2} & 0_{2\times3} \\
			\frac{-\lambda}{N-1}(y^*,0) R^{j-i}
			& 0_{1\times3}
			\end{pmatrix}, \quad\text{with}\quad R^k = \begin{pmatrix}
			\cos\left(\frac{2\pi k}{N}\right) & -\sin\left(\frac{2\pi k}{N}\right) \\
			\sin\left(\frac{2\pi k}{N}\right) & \cos\left(\frac{2\pi k}{N}\right)
			\end{pmatrix}.
			\end{equation}
		\end{lemma}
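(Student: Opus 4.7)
The strategy is to verify the claimed form of $Df$ by direct partial differentiation of the right-hand side of \eqref{eq:dzk} and evaluation at the symmetric deadlock equilibrium $Z_*$, organizing the result into $5 \times 5$ blocks indexed by agent pairs $(i,j)$. Before computing entrywise, I would invoke the $C_N$-equivariance of $f$ established in Lemma \ref{lemma:equivariant-group} together with the fact that $Z_*$ lies in the symmetric subspace: this forces all diagonal blocks to be equal and each off-diagonal block to depend only on the rotational offset $j-i$, reducing the task to computing one diagonal block and one generic off-diagonal block.

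For the diagonal block $A$, I would exploit four simplifications available at $Z_*$. First, $(1 \pm m_k^*)/2$ equals $y^*$ or $1-y^*$ since $m_k^* = 2y^*-1$; second, $1-(m_k^*)^2 = 4y^*(1-y^*)$; third, the deadlock identity $m_k^* + 3\alpha_k^* = 0$ collapses the product-rule derivative of $\sigma(1-m_k^2)(m_k+3\alpha_k)$ down to $\sigma(1-(m_k^*)^2) = 4\sigma y^*(1-y^*)$ in the $\partial \dot m_k/\partial m_k$ entry; and fourth, the gradients $\nabla_{y_k}\vphi_{k,1}|_* = (y^*-1, 0)^T$ and $\nabla_{y_k}\vphi_{k,2}|_* = y^* (1,0)^T$, where the second uses the rotation-sum identity $\sum_{\ell=1}^{N-1} R^\ell = -I_2$ (a consequence of $\sum_{\ell=0}^{N-1}R^\ell = 0$ for $N \geq 2$) to eliminate the coupling sum. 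The $\partial \dot m_k/\partial v_{k,i}$ entries then follow by differentiating $\alpha_k = (v_{k,1}-v_{k,2})/(v_{k,1}+v_{k,2})$ and applying the same deadlock collapse.

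For an off-diagonal block $B_{ij}$ with $i \neq j$, most rows vanish by inspection: $\dot m_i$ depends only on agent-$i$ variables, $\dot v_{i,1}$ depends only on $y_i$, and $\dot y_i$ couples to $y_j$ only (not to $m_j$ or $v_j$). The two nonvanishing entries are $\partial \dot y_i/\partial y_j = ((1-y^*)/N) R^{j-i}$, immediate from the definition of $f_{y_i}$ using $(1-m_i^*)/2 = 1-y^*$, and $\partial \dot v_{i,2}/\partial y_j$. The latter is the main bookkeeping step: differentiating $\vphi_{i,2}$ with respect to $y_j$ yields $-\frac{1}{N}w_i^T R^{j-i}$, where $w_i = y_i - \frac{1}{N-1}\sum_{\ell \neq i}R^{\ell-i}y_\ell$, and evaluating $w_i$ at $Z_*$ via the rotation-sum identity collapses it to $(Ny^*/(N-1))u$, producing the row vector $-(\lambda/(N-1))(y^*,0)R^{j-i}$ of \eqref{eq:Bij}.

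The main obstacle is simply careful bookkeeping and sign tracking in the $\partial \dot v_{i,2}/\partial y_j$ computation; no substantive difficulty arises once the rotation-sum identity of Lemma \ref{lemma:rot-mat-automorphism-lemma} (or its special case $\sum_{\ell=1}^{N-1}R^\ell = -I_2$) is in hand. Matching all computed entries against \eqref{eq:A} and \eqref{eq:Bij} completes the proof.
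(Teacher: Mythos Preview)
Your proposal is correct and is precisely the direct computation the paper alludes to (the paper's own proof reads, in its entirety, ``Direct computation''). Your use of the $C_N$-equivariance to reduce to one diagonal and one off-diagonal block, together with the rotation-sum identity $\sum_{\ell=1}^{N-1}R^\ell=-I_2$, is exactly the right bookkeeping shortcut, and your entrywise checks (in particular the collapse $w_i|_* = \tfrac{N}{N-1}(y^*,0)^T$ yielding the last row of $B_{ij}$) are accurate.
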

		
		\begin{proof}
			Direct computation.
		\end{proof}

		\subsection{Computation of Characteristic Polynomial of $ Df $}\label{sec:char-poly}
		
		\begin{lemma}\label{lemma:compact-jacobian}
			The Jacobian of \eqref{eq:dZ} at that equilibrium given by \eqref{eq:Z-equilibrium} can be compactly represented by
			\begin{equation}\label{eq:compact-jacobian}
			Df = I_N\otimes(A-B) + V^TU.
			\end{equation}
			where ``$ ~\otimes $" indicates the Kronecker product. Here $ B = B_{ii} $, where $ B_{ij} $ is given by \eqref{eq:Bij} (note that $ B_{ii} $ has no dependence on $ i $). Additionally we have 
			\[ C_{i} \coloneqq \begin{pmatrix}
			\frac{1-y^*}{N}R^{-i} & 0_{2\times3} \\
			0_{2\times2} & 0_{2\times3} \\
			\frac{-\lambda}{N-1}(y^*,0) R^{-i}
			& 0_{1\times3}
			\end{pmatrix}, \qquad 
			D_j \coloneqq \begin{pmatrix}
			R^{j} & 0_{2\times3} \\
			0_{3\times2} & 0_{3\times3}
			\end{pmatrix}, \]
			and block matrices $ V \coloneqq (C_1^T~C_2^T~\cdots~C_N^T) $ and $ U \coloneqq (D_1~D_2~\cdots~D_N) $ 
		\end{lemma}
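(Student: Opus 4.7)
The plan is to mirror the proof of Lemma~\ref{lemma:compact-jacobian-limit}, now with the larger $5\times 5$ building blocks that arise when the value dynamics are retained. The key algebraic step is to establish the outer-product factorization $C_i D_j = B_{ij}$ for all $i,j\in\{1,\dots,N\}$, treating $B = B_{ii}$ (the $R^0 = I_2$ case) uniformly with the off-diagonal entries.

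First I would verify $C_i D_j = B_{ij}$ by direct block multiplication. Carving $C_i$ with column partition $[2,3]$ and $D_j$ with the compatible row partition $[2,3]$, only the $R^{-i}$ column of $C_i$ meets the $R^{j}$ row of $D_j$, and they combine through $R^{-i}R^{j} = R^{j-i}$. The resulting product has top-left block $\tfrac{1-y^*}{N}R^{j-i}$, bottom-left block $-\tfrac{\lambda}{N-1}(y^*,0)R^{j-i}$, and zeros in every other block, matching \eqref{eq:Bij} exactly.

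Second, by the construction $V = (C_1^T\;\cdots\;C_N^T)$ and $U = (D_1\;\cdots\;D_N)$, the $(i,j)$ block of the $5N\times 5N$ product $V^T U$ is precisely $C_i D_j = B_{ij}$, so $V^T U$ is the block matrix whose $(i,j)$ entry is $B_{ij}$, with $B$ on the diagonal. Comparing with \eqref{eq:Df}, the only discrepancy is that $Df$ has $A$ on its diagonal rather than $B$. I would therefore split each diagonal block as $A = (A-B) + B$, absorbing the first summand into $I_N \otimes (A-B)$ and the second into the diagonal of $V^T U$, giving $Df = I_N\otimes(A-B) + V^T U$ as claimed.

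The only real obstacle is careful block-matrix bookkeeping: confirming that the zero padding of $C_i$ and $D_j$ annihilates precisely the right entries so that $C_i D_j$ has nonzero content only in the $(1,1)$ and $(3,1)$ positions of the $[2,2,1]\times[2,3]$ block partition, and that the $5\times 5$ dimensions line up in $V^T U$. This is routine once the partitions are aligned, and presents no conceptual novelty beyond the $\lambda\to\infty$ argument already carried out in Lemma~\ref{lemma:compact-jacobian-limit}.
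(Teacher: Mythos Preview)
Your proposal is correct and follows exactly the paper's own argument: observe $C_iD_j = B_{ij}$, assemble $V^TU$ as the block matrix of $B_{ij}$'s, and then split the diagonal $A = (A-B)+B$ to obtain $Df = I_N\otimes(A-B) + V^TU$. The only difference is that you spell out the block-multiplication verification of $C_iD_j = B_{ij}$ in more detail than the paper does.
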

		
		\begin{proof}
			Observe that $ C_iD_j = B_{ij} $. Then have that
			\[ \begin{pmatrix}
			B_{11} & B_{12} & B_{13} & \cdots \\
			B_{21} & B_{22} & B_{23} & \cdots \\
			B_{31} & B_{32} & B_{33} & \cdots \\
			\vdots & \vdots & \vdots & \ddots
			\end{pmatrix} = V^TU. \]
			Thus it follows that
			\[ Df = \begin{pmatrix}
			A & B_{12} & B_{13} & \cdots \\
			B_{21} & A & B_{23} & \cdots \\
			B_{31} & B_{32} & A & \cdots \\
			\vdots & \vdots & \vdots & \ddots
			\end{pmatrix} = I_N\otimes(A-B) + V^TU. \]
		\end{proof}
		
		\begin{lemma}\label{lemma:det-reduction}
			Let $ N\in\mathbb{N} $ and consider $ N \geq 3 $. Let $ A,B,C_k,D_k,V,U $ be defined as in Lemma \ref{lemma:compact-jacobian}. Then we may write the characteristic polynomial of $ Df $ as
			\begin{equation}\label{eq:det-Df-simple}
			\det(Df-\mu I_{3N}) = \det\left(M + \sum_{k=1}^{N}D_k adj(A-B-\mu I_3)C_k\right)\det\left(A-B-\mu I_3\right)^{N-2},
			\end{equation}
			where $ M $ is given by
			\[ M = \begin{pmatrix}
			\det(A-B-\mu I_3) & 0 & 0 & 0 & 0 \\
			0 & \det(A-B-\mu I_3) & 0 & 0 & 0 \\
			0 & 0 & 1 & 0 & 0 \\
			0 & 0 & 0 & 1 & 0 \\
			0 & 0 & 0 & 0 & 1 \\
			\end{pmatrix}. \]
		\end{lemma}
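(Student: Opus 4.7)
The plan is to mimic the proof of Lemma \ref{lemma:det-reduction-limit}, upgrading from $3 \times 3$ to $5 \times 5$ block structure. I would begin by applying the compact representation from Lemma \ref{lemma:compact-jacobian} to write $Df - \mu I = I_N \otimes (A - B - \mu I_5) + V^T U$. The Matrix Determinant Lemma (Lemma \ref{lemma:mat-det-lemma}) together with the identity $\det(I_N \otimes X) = \det(X)^N$ then yields
\[ \det(Df - \mu I) = \det\bigl(I_5 + U\,(I_N \otimes (A - B - \mu I_5))^{-1} V^T\bigr)\det(A - B - \mu I_5)^N. \]
Since $(I_N \otimes X)^{-1} = I_N \otimes X^{-1}$, the block structure of $V$ and $U$ collapses the middle product to $\sum_{k=1}^N D_k (A - B - \mu I_5)^{-1} C_k$, and substituting $(A - B - \mu I_5)^{-1} = \operatorname{adj}(A - B - \mu I_5)/\det(A - B - \mu I_5)$ pulls a uniform scalar factor out of each summand.

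The final step is to exploit the sparsity of $D_k$ and $C_k$. By inspection of Lemma \ref{lemma:compact-jacobian}, $D_k$ has support only in its $2 \times 2$ upper-left block, while $C_k$ is supported only in columns $1$ and $2$. Hence $D_k\,\operatorname{adj}(A - B - \mu I_5)\,C_k$ has zero entries in rows $3, 4, 5$ and in columns $3, 4, 5$. Adding $I_5$ therefore leaves rows $3$--$5$ equal to the standard basis vectors $e_3, e_4, e_5$, and by multilinearity of the determinant applied to the first two rows I can absorb exactly two factors of $\det(A - B - \mu I_5)$ into the inner determinant. This converts the prefactor into $\det\!\bigl(M + \sum_k D_k\,\operatorname{adj}(A - B - \mu I_5)\,C_k\bigr)$, with $M$ exactly as displayed in the statement, and reduces the outside exponent from $N$ to $N - 2$.

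The main obstacle is the bookkeeping in this last step: carefully verifying the sparsity pattern of $D_k\,\operatorname{adj}(\cdot)\,C_k$ so that precisely two (not three) rows pick up a $\det(A - B - \mu I_5)$ factor, ensuring the outside exponent truly drops to $N - 2$ and that the resulting matrix agrees with the $M$ stipulated in the lemma. Once this is confirmed, the rest of the argument is essentially algebraic repackaging of the limit-case derivation and contains no new analytic content.
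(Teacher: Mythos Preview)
Your proposal is correct and follows precisely the paper's intended approach: the paper does not write out a separate proof for this lemma, since it is line-for-line the same computation as in Lemma~\ref{lemma:det-reduction-limit} with $3$ replaced by $5$ throughout (indeed the $I_{3N}$ and $I_3$ in the displayed statement are evident copy-paste artifacts from the limit case). Your sparsity check is right---$D_k$ kills rows $3$--$5$ and $C_k$ kills columns $3$--$5$, so exactly two factors of $\det(A-B-\mu I_5)$ absorb into the inner determinant---and there is no further obstacle.
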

		
		\subsection{Trace of $ Df $}\label{sec:Df-trace}
		\begin{lemma}\label{lemma:Df-trace}
			The trace of $ Df $ is given by
			\begin{equation}\label{eq:trace-Df}
			\text{Tr}(Df) = 2\left( 1-y^* - N - N\lambda \right) + 4N\sigma y^*(1-y^*).
			\end{equation}
		\end{lemma}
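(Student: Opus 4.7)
The plan is straightforward: the statement is an immediate consequence of the block structure of $Df$ recorded in \eqref{eq:Df} together with the explicit formula for the diagonal block $A$ in \eqref{eq:A}, so no clever manipulation is needed.

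First I would observe that in the block decomposition \eqref{eq:Df} the only blocks that contribute to the trace of $Df$ are the diagonal blocks, and by the symmetry of our setup all $N$ of these are equal to the single matrix $A$. Hence
\[ \text{Tr}(Df) = \sum_{k=1}^{N} \text{Tr}(A) = N\,\text{Tr}(A). \]
The off-diagonal blocks $B_{ij}$ contribute nothing to the trace; this justifies reducing a potentially messy $5N \times 5N$ computation to a $5 \times 5$ one.

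Next I would simply read the diagonal of $A$ off of \eqref{eq:A}. The five diagonal entries are
\[ \tfrac{1-y^*-N}{N},\quad \tfrac{1-y^*-N}{N},\quad 4\sigma y^*(1-y^*),\quad -\lambda,\quad -\lambda, \]
so that
\[ \text{Tr}(A) = \tfrac{2(1-y^*-N)}{N} + 4\sigma y^*(1-y^*) - 2\lambda. \]
Multiplying by $N$ and collecting terms gives
\[ \text{Tr}(Df) = 2(1-y^*-N) + 4N\sigma y^*(1-y^*) - 2N\lambda = 2(1-y^*-N-N\lambda) + 4N\sigma y^*(1-y^*), \]
which is precisely \eqref{eq:trace-Df}.

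There is no genuine obstacle here; the only thing to be careful about is to confirm that the block-diagonal entries of $Df$ really are $A$ (not $A-B$ as in the compact representation \eqref{eq:compact-jacobian}, where the Kronecker-product term uses $A-B$ because $B=B_{ii}$ has been absorbed into the $V^T U$ part). Since \eqref{eq:Df} writes the Jacobian directly with $A$ on the block diagonal, the trace really is $N\,\text{Tr}(A)$ and the computation above is all that is required.
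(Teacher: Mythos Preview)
Your proof is correct and follows the same approach as the paper, which simply states that the result is a direct computation from \eqref{eq:Df} and \eqref{eq:A}; you have merely written out that computation in full detail.
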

		
		\begin{proof}
			This follows from direct computation using \eqref{eq:Df} and $ \eqref{eq:A} $.
		\end{proof}
		
	\end{appendices}
\end{document}